\let\cal=\mathcal
\def\N{{\mathbb N}}
\def\R{{\mathbb R}}
\newcommand{\rmd}{{\rm d}}
\newtheorem{thm}{Theorem}[section]
\newtheorem{lemma}[thm]{Lemma}
\newtheorem{prop}[thm]{Proposition}
\theoremstyle{definition}
\newtheorem{de}[thm]{Definition}
\theoremstyle{remark}
\newtheorem{rem}[thm]{Remark}
\newtheorem{ass}[thm]{Assumption}
\numberwithin{equation}{section}
\begin{document}

\title[The multiplicative ergodic theorem for McKean-Vlasov SDEs]
{The multiplicative ergodic theorem for McKean-Vlasov SDEs}

\author{Xianjin Cheng}
\address{X. Cheng: School of Mathematical Sciences,
Dalian University of Technology, Dalian 116024, P. R. China}
\email{xjcheng1119@hotmail.com}

\author{Zhenxin Liu}
\address{Z. Liu: School of Mathematical Sciences,
Dalian University of Technology, Dalian 116024, P. R. China}
\email{zxliu@dlut.edu.cn}

\author{Lixin Zhang}
\address{L. Zhang (Corresponding author): School of Mathematical Sciences,
Dalian University of Technology, Dalian 116024, P. R. China}
\email{lixinzhang8@hotmail.com;1031476306@mail.dlut.edu.cn}

\date{January 17, 2024}
\subjclass[2010]{34D08, 37A50, 37H15, 60H10.}
\keywords{McKean-Vlasov stochastic differential equations; Lyapunov exponents; the multiplicative ergodic theorem; the Lions derivative.}

\begin{abstract}
In this paper, we establish the multiplicative ergodic theorem for McKean-Vlasov stochastic differential equations, in which the Lyapunov exponent is defined using the upper limit. The reasonability of this definition is illustrated through an example; i.e., even when the coefficients are regular enough and their first-order derivatives are bounded, the upper limit cannot be replaced by a limit, as the limit may not exist. Furthermore, the example reveals how the dependence on distribution significantly influences the dynamics of the system and evidently distinguishes McKean-Vlasov stochastic differential equations from classical stochastic differential equations.
\end{abstract}

\maketitle

\section{Introduction}
McKean-Vlasov stochastic differential equations (SDEs), also known as the mean-field SDEs, were initially introduced by Kac \cite{Kac1,Kac2} in order to study the Boltzmann equation for the particle density in diluted monatomic gases and the stochastic toy model for the Vlasov kinetic equation of plasma. Such kinds of equations describe the motion of individual particles in a system, where each particle is influenced not only by its own position but also by all particles in the system. This leads to its solution not being a strong Markov process; see \cite{W}. McKean-Vlasov SDEs have been applied extensively in the fields of mean-field games, stochastic control and mathematical finance; see e.g. \cite{Carmona1,LL1,LL2,LL} and the references therein. The study of the well-posedness and the invariant measures for McKean-Vlasov SDEs has attracted extensive attention, and there have been a great deal of works, such as \cite{F,LM,Ma,MV,W}.

Lyapunov exponents play a crucial role in the study of the asymptotic behavior of dynamical systems, which are used to measure the separation rate between neighboring trajectories and describe the complexity of systems. It was introduced by the Russian mathematician Lyapunov in 1892 and has since gained widespread attention. In the realm of differential dynamical systems, Oseledets \cite{O} established the multiplicative ergodic theorem, which asserts the regularity and existence of the Lyapunov spectrum of a linear cocycle over a metric dynamical system under the assumption of an integrability condition. Since then, the theorem has attracted many researchers to provide new proofs and formulations with increasing generality; see e.g.  \cite{Ragh,LPD,Ruelle1,Ruelle2,Walters}. For classical SDEs, there has been a substantial amount of research on the Lyapunov exponent. For instance, Carverhill \cite{Car} applied the framework of ergodic theory and established the stochastic flow version of the multiplicative ergodic theorem under appropriate regularity conditions. Additional studies can be found in \cite{Cong, Liao1, Liao2}, and so on.

In this paper, we consider the $d$-dimensional McKean-Vlasov SDE with the deterministic initial value $x\in\R^d$:
\begin{align}\label{11}
X_t=x+\sum_{k=0}^{d'}\int_s^t V_k(X_r,P_{X_r})\,\rmd W_r^k,\quad t\geq s\geq0,
\end{align}
driven by a $d'$-dimensional standard Brownian motion $W_t=(W_t^1,...,W^{d'}_t)$, with coefficients $V_k:\R^d\times\cal{P}_2(\R^d)\rightarrow\R^d~(k=0,...,d')$, where $\rmd W_r^0=\rmd r$, $P_{X_r}$ is the law of $X_r$ and $\cal{P}_2(\R^d)$ is the set of all probability measures on $\R^d$ with finite second moments. The unique solution of equation \eqref{11} is denoted by $\Phi_{s,t}(x)$. When the coefficients satisfy certain regularity conditions, the mapping $x\mapsto\Phi_{s,t}(x)$ is $P$-$a.s.$ differentiable. The Jacobian matrix is denoted by $\partial_x\Phi_{s,t}(x)$. Notice that $\Phi_{s,t}(x)$ does not define a flow, as $\Phi_{r,t}(\Phi_{s,r}(x))$ is not the solution of \eqref{11} with $\Phi_{s,r}(x)$ as the initial value; see e.g.\ \cite{BLPR} for details. This results in our inability to discuss Lyapunov exponents of McKean-Vlasov SDEs in the same way as classical SDEs. In this paper, we define Lyapunov exponents of equation \eqref{11} with $s=0$ using the upper limit by
\begin{align}\label{upperLE}
\limsup_{t\rightarrow+\infty}\frac{1}{t}\log|\partial_x\Phi_{0,t}(\omega,x)v|=:\lambda(\omega,x,v),
\end{align}
for any $x\in\R^d$, $v\in\R^d$ and almost all $\omega$. For classical SDEs, under appropriate regularity assumptions, the upper limit in \eqref{upperLE} can be replaced by a limit, as the limit does exist. However, for McKean-Vlasov SDEs, we illustrate through an example in Section~\ref{section4} that even when the coefficients are regular enough and the first-order derivatives are bounded, the upper limit cannot be replaced by a limit, as the limit may not exist. In this example, the dependency of coefficients on the distribution is entirely determined by an ordinary differential equation (ODE), which transforms the autonomous McKean-Vlasov SDE into a non-autonomous classical SDE. By constructing coefficients of this ODE, we achieve alternating occurrences of rapid and slow growth for the particle distribution on a time partition with exponentially increasing interval lengths. Similar methods can be found in \cite{CLZ} or \cite{II}. This example demonstrates that the distribution has a significant impact on the dynamical behavior of the system. Meanwhile, this also illustrates the reasonability for employing the upper limit to define Lyapunov exponents in the mean-field version of the multiplicative ergodic theorem (Theorem \ref{MET1}) we are about to establish.

Unlike the classical multiplicative ergodic theorem, the absence of flow properties in this system prevents the application of classical conclusions from ergodic theory, such as random dynamical systems and invariant measures. Therefore, in the mean-field version of the multiplicative ergodic theorem presented in this paper, we approach the problem by considering trajectories individually. Furthermore, the absence of flow properties presents us with many challenges in the proof of the upper bound for all Lyapunov exponents. To overcome them, we adopt the approach presented in \cite{BLPR} to decompose equation~\eqref{11} with $s=0$ such that the solution that depends solely on the deterministic initial values is transformed into a form that simultaneously depends on both the deterministic initial values and their distributions, i.e. $\Phi_{0,t}(x)=\Phi_{0,t}^{\delta_x}(x)$. If we replace the initial distribution $\delta_x$ with $P_\xi$, then $\Phi_{0,t}^{P_\xi}(x)$ is the unique solution of decoupling SDE with the initial distribution $P_{\xi}$ starting from $x$ at $s$ and satisfies the property of a two-parameter stochastic flow with respect to the spatial variable, while the distribution $P_\xi$ is at the base point and moves with time. Moreover, the Jacobian matrix $\partial_x\Phi_{0,t}(x)$ can be expressed as the sum of the derivative of $\Phi_{0,t}^{\delta_x}(x)$ with respect to the spatial variable $x$ and the derivative of $\Phi_{0,t}^{\delta_x}(x)$ with respect to $x$ within the distribution, which are respectively denoted as $\partial_{x}\Phi_{0,t}^{\delta_y}(x)|_{y=x}$ and $\partial^{x}\Phi_{0,t}^{\delta_x}(y)|_{y=x}$.
Hence,
\begin{align*}
\limsup_{t\rightarrow+\infty}\frac{1}{t}\log|\partial_x\Phi_{0,t}(\omega,x)v|&=\limsup_{t\rightarrow+\infty}\frac{1}{t}\log|\partial_{x}\Phi_{0,t}^{\delta_x}(\omega,x)v+\partial^{x}\Phi_{0,t}^{\delta_x}(\omega,x)v|\\
&\leq\max\Big\{\limsup_{t\rightarrow+\infty}\frac{1}{t}\log|\partial_{x}\Phi_{0,t}^{\delta_x}(\omega,x)v|,\limsup_{t\rightarrow+\infty}\frac{1}{t}\log|\partial^{x}\Phi_{0,t}^{\delta_x}(\omega,x)v|\Big\}\\
&=:\max\big\{\lambda_1(\omega,x,v),\lambda_2(\omega,x,v)\big\}.
\end{align*}
To prove that $\lambda(\omega,x,v)$ has an upper bound with respect to $(\omega,x,v)$, it is sufficient to demonstrate that both $\lambda_1(\omega,x,v)$ and $\lambda_2(\omega,x,v)$ have upper bounds.
For $\lambda_1(\omega,x,v)$, since its corresponding equation is a classical SDE, the upper bound of $\lambda_1(\omega,x,v)$ can be obtained using the method in \cite{Cong} by applying the strong law of large numbers. However, for $\lambda_2(\omega,x,v)$, the shift in distribution prevents the satisfaction of the conditions for the strong law of large numbers. Nevertheless, by utilizing the properties of the logarithmic function and imitating the proof of the strong law of large numbers, we establish the upper bound of $\lambda_2(\omega,x,v)$.

This paper is organized as follows. In section \ref{Preliminaries}, we review and introduce some preliminaries. Section~\ref{MET} is dedicated to establishing the mean-field version of the multiplicative ergodic theorem. In Section~\ref{section4}, we construct an example to explain the reasonability for employing the upper limit in defining Lyapunov exponents in the preceding section and to reveal how the dependence on distribution significantly influences the dynamics of the system.

\section{Preliminaries}\label{Preliminaries}
\setcounter{equation}{0}
Throughout the paper, we assume that $\R^d$ is the $d$-dimensional Euclidean space with the Borel $\sigma$-field $\cal{B}(\R^d)$, $\cal{P}_2(\R^d)$ is the set of all probability measures $\mu$ over $(\R^d,\cal{B}(\R^d))$
with finite second moments, i.e.\ $\int_{\R^d} |x|^2\mu(\rmd x)<+\infty$, where $|\cdot|$ represents the norm of the Euclidean space. Additionally, let $\|\cdot\|$ be the $2$-norm of $d\times d$ matrices.

\subsection{Wiener Measure and Wiener Shift}

In this subsection, we introduce the concepts of Wiener measure and Wiener Shift. See \cite[Chapter 2]{Duan} for more details. Let $W_t$ be a standard, $d'$-dimensional Brownian motion defined on some probability space $(\Omega,\mathcal{F},P)$. Let $C(\R^+,\R^{d'})$ be the space of continuous functions from $\R^+$ to $\R^{d'}$, equipped with the metric
$$\rho(\omega_1,\omega_2)=\sum_{n=1}^{\infty}\frac{1}{2^n}\frac{\max_{0\leq t\leq n}|\omega_1(t)-\omega_2(t)|}{1+\max_{0\leq t\leq n}|\omega_1(t)-\omega_2(t)|}$$
for any $\omega_1,\omega_2\in C(\R^+,\R^{d'})$. This metric induces the topology of uniform convergence on compact time intervals. We denote the Borel $\sigma$-field generated by the metric $\rho$ as $\cal{B}(C(\R^+,\R^{d'}))$.
Moreover, $W_t$ can be thought of as a random variable with values in $C(\R^+,\R^{d'})$, denoted by $W$. Then the random variable $W$ induces a probability measure (i.e. the law of $W$) on $(C(\R^+,\R^{d'}),\cal{B}(C(\R^+,\R^{d'})))$, and we denote it as $P_W$. This measure is called the \emph{Wiener measure}.

We consider another stochastic process $\overline{W}_t$ on $C(\R^+,\R^{d'})$ by
$$\overline{W}_t:C(\R^+,\R^{d'})\rightarrow\R^{d'},\quad\overline{W}_t(\omega)=\omega(t).$$
It should be noted that the two stochastic processes $W_t$ and $\overline{W}_t$ are equivalent; i.e. they have the same finite-dimensional distributions. Thus $\overline{W}_t$ is a standard, $d'$-dimensional Brownian motion defined on the probability space $(C(\R^+,\R^{d'}),\cal{B}(C(\R^+,\R^{d'})),P_W)$. We refer to $\overline{W}_t$ as the \emph{canonical version} of $W_t$, and $(C(\R^+,\R^{d'}),\cal{B}(C(\R^+,\R^{d'})),P_W)$ as the \emph{canonical probability space} for Brownian motion $W_t$, respectively. For simplicity, we still denote the canonical probability space and Brownian motion $\overline{W}_t$ by the original notation $(\Omega,\mathcal{F},P)$ and $W_t$, which will be employed later in the subsequent sections.

We define the \emph{Wiener shift} $\theta_t$ on the canonical probability space $(\Omega,\mathcal{F},P)$ by
\begin{align*}
\theta_t:~&\Omega\rightarrow\Omega,~(\theta_t\omega)(s)=\omega(t+s)-\omega(t),\quad s\geq0,
\end{align*}
for any $t\geq0$. Consider the dynamical system $(\Omega,\mathcal{F},P,\theta_t)$. Then the Wiener measure $P$ is ergodic under the Wiener shift $\theta_t$; see \cite[Appendix A]{Arnold} for more details.

\subsection{Lions Derivatives and Regularity Conditions}

In this subsection, we introduce the concept of the Lions derivative and a class of functions defined on $\R^d\times\cal{P}_2(\R^d)$. See
\cite{BLPR}, \cite[Section~6]{CP} or \cite{CD1} for more details. Consider the ``rich enough" probability space $(\Omega, \cal{F}, P)$, which means that for any $\mu\in\cal{P}_2(\R^d)$ there exists a square-integrable random variable~$v$, defined on $(\Omega, \cal{F}, P)$ with values in $\R^d$, such that $P_v=\mu$, i.e.\ $\cal{P}_2(\R^d)=\{P_v;v\in L^2(\Omega,\cal{F},P;\R^d)\}$.
\begin{de}\label{LD}
We say that function $f:\cal{P}_2(\R^d)\rightarrow\R$ is \emph{Lions differentiable} at $P_{v}\in \cal{P}_2(\R^d)$ if $\widetilde{f}$ is Fr\'echet differentiable at $v$, where $\widetilde{f}$ is the canonical lift of $f$ from $\cal{P}_2(\R^d)$ to $L^2(\Omega,\cal{F},P;\R^d)$ satisfying $f(P_w)=\widetilde{f}(w)$ for all $w\in L^2(\Omega,\cal{F},P;\R^d)$.
\end{de}
It is worth noting that the above definition is independent of the selection of the probability space $(\Omega, \cal{F}, P)$ and the representation $v$, which implies the reasonability of Definition~\ref{LD}. See \cite{BLPR} or \cite{CD1} for more details. Then the \textit{Lions derivative} of $f$ at $P_{v}$ is denoted as $\partial_{\mu}f(P_{v})\in L^{2}_{P_{v}}(\R^d,\R^d):=\big\{g:\R^d\rightarrow\R^d;\int_{\R^d}|g(x)|^2P_v(\rmd x)<+\infty\big\}$, which is $P_v$-$a.s.$ uniquely determined. However, if we suppose that the Fr\'echet derivative $D\widetilde{f}:L^2(\Omega,\cal{F},P;\R^d)\rightarrow L(L^2(\Omega,\cal{F},P;\R^d);\R)$ is Lipschitz continuous, then there exists a Lipschitz continuous modification of $\partial_\mu f(P_v ,\cdot)$; see \cite{BLPR} or \cite[Lemma~3.3]{CD1} for details. Therefore, we can impose Lipschitz continuity or even differentiability requirements on $\partial_{\mu}f(P_{v},\cdot)$. In the following text, all notions related to Lions derivatives hold in the sense of `modification'.

For second-order Lions derivatives of $f:\cal{P}_2(\R^d)\rightarrow\R$, we can directly apply the previous discussion to each component of $\partial_{\mu}f(\cdot,y)=\big(\partial^1_{\mu}f(\cdot,y),...,\partial^d_{\mu}f(\cdot,y)\big)$ for fixed $y\in\R^d$; see e.g.\ \cite{Crisan} for details. Similarly, we can define higher-order Lions derivatives. If the $n$-order Lions derivative of $f$ exists, then for any multi-index $\alpha=(i_1,...,i_n)\in\{1,...,d\}^n$, we denote the $\alpha$-component of the $n$-order Lions derivative by $\partial^\alpha_\mu f=\partial^{i_n}_\mu...\partial^{i_1}_\mu f:\cal{P}_2(\R^d)\times(\R^d)^n\rightarrow\R$.

Let us finish the discussion about the Lions derivative by introducing a class of functions defined on $\R^d\times\cal{P}_2(\R^d)$ that satisfy the following assumptions. See \cite{Crisan} for details.
\begin{ass}[Regularity conditions]\label{con}
We say that $V= (V^1,...,V^d)\in\cal{C}_{b,Lip}^{1,1}(\R^d\times\cal{P}_2(\R^d);\R^d)$ if $\partial_\mu V$ and $\partial_x V$ exist and satisfy: there exists a constant $K>0$ such that
\begin{itemize}
  \item (Boundedness) for all $(x,\mu,v)\in \R^d\times\cal{P}_2(\R^d)\times\R^d$,
   $$\|\partial_x V(x,\mu)\|+\|\partial_\mu V(x,\mu,v)\|\leq K,$$
  \item (Lipschitz continuity) for all $(x,\mu,v),(x',\mu',v')\in \R^d\times\cal{P}_2(\R^d)\times\R^d$,
    $$\|\partial_x V(x,\mu)-\partial_x V(x',\mu')\|\leq K(|x-x'|+W_2(\mu,\mu')),$$
    $$\|\partial_\mu V(x,\mu,v)-\partial_\mu V(x',\mu',v')\|\leq K(|x-x'|+W_2(\mu,\mu')+|v-v'|),$$
    where $W_2$ is the 2-Wasserstein metric.
\end{itemize}
We say that $V\in\cal{C}_{b,Lip}^{n,n}(\R^d\times\cal{P}_2(\R^d);\R^d)$ if the following conditions are satisfied: for any $i\in\{1,...,d\}$, all multi-indices $\alpha\in\{1,...,d\}^k~(k=0,...,n),\gamma,\beta_1,...,\beta_{\#\alpha}\in\{(i_1,...,i_d);i_j\in\{0,...,n\}\}$ satisfying $\#\alpha+\nmid\!\beta_1\!\nmid+...+\nmid\!\beta_{\#\alpha}\!\nmid+\nmid\!\gamma\!\nmid\leq n$, the derivatives
$$\partial_x^\gamma\partial_{\boldsymbol{v}}^{\boldsymbol{\beta}}\partial_\mu^\alpha V^i(x,\mu,\boldsymbol{v}),
~\partial_{\boldsymbol{v}}^{\boldsymbol{\beta}}\partial_\mu^\alpha\partial_x^\gamma V^i(x,\mu,\boldsymbol{v}),
~\partial_{\boldsymbol{v}}^{\boldsymbol{\beta}}\partial_x^\gamma\partial_\mu^\alpha V^i(x,\mu,\boldsymbol{v})$$
exist and each of these derivatives is bounded and Lipschitz, where $\#\alpha$ is the number of components of $\alpha$, $\boldsymbol{\beta}=(\beta_1,...,\beta_{\#\alpha})$, $\boldsymbol{v}=(v_1,...,v_{\#\alpha})\in(\R^d)^{\#\alpha}$ and $\nmid\!\cdot\!\nmid$ represents the sum of its components. When $\#\alpha=0$, we define $\boldsymbol{\beta}=0$. Notice that $\partial_{\boldsymbol{v}}^{\boldsymbol{\beta}}$ and $\partial_\mu^\alpha$ are not exchangeable.
\end{ass}

\subsection{Cantelli' Strong Law of Large Numbers and the H\'ajek-R\'enyi Inequality}

In this subsection, we recall Cantelli' strong law of large numbers (see \cite[Theorem 3.1, Chapter 4]{Shi}) and the H\'ajek-R\'enyi inequality (see \cite{HR}), which play important roles in establishing that all Lyapunov exponents have an upper bound.

\begin{prop}[Cantelli' strong law of large numbers]
Let $\{\xi_n\}_{n\geq1}$ be a sequence of independent random variables with finite fourth moments and satisfy
$$E|\xi_n-E\xi_n|^4\leq C,~n\geq1,$$
for some constant $C$. Then as $n\rightarrow+\infty$,
$$\frac{S_n- ES_n}{n}\rightarrow0, ~P\text{-}a.s.,$$
where $S_n=\xi_1+...+\xi_n$.
\end{prop}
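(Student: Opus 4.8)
The plan is to reduce immediately to the centered case: replacing $\xi_n$ by $\xi_n-E\xi_n$ alters neither the hypothesis $E|\xi_n-E\xi_n|^4\leq C$ nor the conclusion, so we may assume $E\xi_n=0$ for all $n$ and must then show $S_n/n\to0$ $P$-a.s. The heart of the argument is a fourth-moment bound on $S_n$ combined with the Borel--Cantelli lemma.

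First I would expand $E S_n^4=\sum_{i,j,k,l=1}^n E[\xi_i\xi_j\xi_k\xi_l]$. By independence and $E\xi_i=0$, every term in which some index occurs exactly once vanishes; hence only the ``diagonal'' terms $E\xi_i^4$ and the ``two-pair'' terms $E[\xi_i^2\xi_j^2]=E\xi_i^2\,E\xi_j^2$ with $i\neq j$ survive, the latter appearing with multiplicity $3$ (the number of ways to split four slots into two pairs). Using $E\xi_i^2\leq(E\xi_i^4)^{1/2}\leq C^{1/2}$ together with $E\xi_i^4\leq C$, this gives
$$
E S_n^4=\sum_{i=1}^n E\xi_i^4+3\sum_{\substack{i,j=1\\ i\neq j}}^n E\xi_i^2\,E\xi_j^2\leq nC+3n(n-1)C\leq 3Cn^2 .
$$

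Next, fix $\varepsilon>0$. Markov's inequality applied to $S_n^4$ yields
$$
P\!\left(\frac{|S_n|}{n}>\varepsilon\right)\leq\frac{E S_n^4}{\varepsilon^4 n^4}\leq\frac{3C}{\varepsilon^4 n^2},
$$
and since $\sum_{n\geq1}n^{-2}<+\infty$, the Borel--Cantelli lemma gives $P(|S_n|/n>\varepsilon\text{ infinitely often})=0$. Taking $\varepsilon=1/m$ and intersecting over $m\in\N$ shows $S_n/n\to0$ $P$-a.s., which is the assertion.

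The argument is essentially routine; the only point requiring care is the combinatorial bookkeeping in the expansion of $E S_n^4$, and the observation that it is the \emph{fourth} moment (rather than a second) that makes $\sum_n P(|S_n|>\varepsilon n)$ summable directly: a second-moment bound would only produce the divergent $\sum_n n^{-1}$ and would force the usual subsequence-plus-maximal-inequality detour instead.
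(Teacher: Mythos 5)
Your proof is correct: the reduction to the centered case, the bound $ES_n^4\leq nC+3n(n-1)C$ via the vanishing of mixed terms, and the fourth-moment Markov estimate plus Borel--Cantelli are exactly the standard argument. The paper itself gives no proof but cites Shiryaev for this proposition, and the proof there is essentially the same one you wrote.
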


\begin{prop}[The H\'ajek-R\'enyi inequality]
Let $\{\xi_n\}_{n\geq1}$ be a sequence of independent random variables and $\{b_{n}, n \ge 1\}$ be a positive nondecreasing sequence of real numbers. Then for any $\varepsilon >0$ and positive integers $m<n$,
\begin{align*}
P \Biggl(\max_{m\leq k\leq n} \Biggl\vert \frac{1}{b_{k}}\sum _{i=1}^{k} (\xi_{i}-E\xi_i) \Biggr\vert > \varepsilon \Biggr)
\leq \varepsilon ^{-2} \Biggl(\frac{1}{b_{m}^{2}}\sum _{i=1}^{m}(\xi_{i}-E\xi_i)^{2}+\sum _{i=m+1}^{n} \frac{E(\xi_{i}-E\xi_i)^{2}}{b_{i}^{2}} \Biggr).
 \end{align*}
\end{prop}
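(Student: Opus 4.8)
The plan is to run the classical stopping-time argument behind the Hájek--Rényi inequality; the point is that Kolmogorov's maximal inequality by itself does not "see" the weights $b_k$, so one needs an Abel-summation device to bring them in, and that rearrangement is the step I expect to be the real obstacle. As preliminaries: the right-hand side is infinite unless $E\xi_i^2<\infty$, so I assume this, center the variables by putting $\eta_i:=\xi_i-E\xi_i$ and $S_k:=\sum_{i=1}^k\eta_i$, and read the first summand on the right-hand side as $\tfrac{1}{b_m^2}\sum_{i=1}^m E(\xi_i-E\xi_i)^2=\tfrac{1}{b_m^2}ES_m^2$. The structure I will exploit is that $(S_k)$ is a square-integrable martingale for $\mathcal F_k:=\sigma(\xi_1,\dots,\xi_k)$ with orthogonal increments $\eta_i$, $E\eta_i^2=E(\xi_i-E\xi_i)^2$, and that $k\mapsto b_k^{-2}$ is positive and nonincreasing.

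First I would introduce the stopping time $T:=\inf\{k:\ m\le k\le n,\ |S_k|\ge\varepsilon b_k\}$, with $T:=+\infty$ when this set is empty, so that $\{\max_{m\le k\le n}|S_k|/b_k\ge\varepsilon\}=\bigcup_{k=m}^n\{T=k\}$ is a disjoint union. On $\{T=k\}$ one has $\varepsilon^2 b_k^2\le S_k^2$, hence $\varepsilon^2 P(T=k)\le b_k^{-2}E[S_k^2\mathbf{1}_{\{T=k\}}]$, and summing over $k$,
\[
\varepsilon^2\,P\Bigl(\max_{m\le k\le n}\tfrac{|S_k|}{b_k}\ge\varepsilon\Bigr)\ \le\ \sum_{k=m}^n b_k^{-2}\,E\bigl[S_k^2\mathbf{1}_{\{T=k\}}\bigr].
\]

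Next comes the Abel-summation reorganization. Put $d_j:=b_j^{-2}-b_{j+1}^{-2}\ge 0$ for $m\le j\le n-1$ and $d_n:=b_n^{-2}$, so that $b_k^{-2}=\sum_{j=k}^n d_j$ and the telescoping identities $\sum_{j=m}^n d_j=b_m^{-2}$ and $\sum_{j=i}^n d_j=b_i^{-2}$ hold. Swapping the order of summation and using disjointness of the events $\{T=k\}$,
\[
\sum_{k=m}^n b_k^{-2}\,S_k^2\mathbf{1}_{\{T=k\}}\ =\ \sum_{j=m}^n d_j\sum_{k=m}^j S_k^2\mathbf{1}_{\{T=k\}}\ =\ \sum_{j=m}^n d_j\,S_T^2\mathbf{1}_{\{T\le j\}}\ \le\ \sum_{j=m}^n d_j\,S_{T\wedge j}^2 .
\]
It then remains to estimate $E[S_{T\wedge j}^2]$: writing $S_{T\wedge j}=S_m+\sum_{i=m+1}^j\eta_i\mathbf{1}_{\{T\ge i\}}$ and noting that $\mathbf{1}_{\{T\ge i\}}=1-\mathbf{1}_{\{T\le i-1\}}$ is $\mathcal F_{i-1}$-measurable while $\eta_i$ is centered and independent of $\mathcal F_{i-1}$, all cross terms vanish in expectation, so
\[
E[S_{T\wedge j}^2]\ =\ ES_m^2+\sum_{i=m+1}^j E\eta_i^2\,P(T\ge i)\ \le\ ES_m^2+\sum_{i=m+1}^j E\eta_i^2 .
\]

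Finally, taking expectations in the chain displayed above, substituting this last bound, and collapsing the two telescoping sums yields
\[
\varepsilon^2\,P\Bigl(\max_{m\le k\le n}\tfrac{|S_k|}{b_k}\ge\varepsilon\Bigr)\ \le\ \frac{ES_m^2}{b_m^2}+\sum_{i=m+1}^n\frac{E\eta_i^2}{b_i^2}\ =\ \frac{1}{b_m^2}\sum_{i=1}^m E(\xi_i-E\xi_i)^2+\sum_{i=m+1}^n\frac{E(\xi_i-E\xi_i)^2}{b_i^2},
\]
which is the asserted inequality (the statement's strict inequality on the event $\{\,\cdot>\varepsilon\}$ follows a fortiori). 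As a variant, the bound on $E[S_{T\wedge j}^2]$ is precisely the $L^2$ optional-stopping identity for the martingale $(S_k)_{k\ge m}$ applied to the bounded stopping time $T\wedge j$, should one prefer to cite that directly rather than expand the increments by hand.
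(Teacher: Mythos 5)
Your proof is correct. Note that the paper does not actually prove this proposition: it is quoted as a known result with a citation to H\'ajek and R\'enyi's original 1955 paper, so there is no in-paper argument to compare against; what you have supplied is a complete, self-contained proof along the standard martingale/stopping-time lines. All the steps check out: the disjoint decomposition over $\{T=k\}$, the Abel-summation weights $d_j=b_j^{-2}-b_{j+1}^{-2}\geq 0$, $d_n=b_n^{-2}$ with $\sum_{j=i}^n d_j=b_i^{-2}$, the bound $S_T^2\mathbf{1}_{\{T\leq j\}}\leq S_{T\wedge j}^2$, and the vanishing of cross terms because $\mathbf{1}_{\{T\geq i\}}$ is $\mathcal{F}_{i-1}$-measurable while $\eta_i$ is centered and independent of $\mathcal{F}_{i-1}$; collapsing the two telescoping sums gives exactly the claimed bound, and the event with strict inequality is contained in the one you estimate. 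Two small remarks. First, you were right to read the first summand on the right-hand side as $\frac{1}{b_m^2}\sum_{i=1}^m E(\xi_i-E\xi_i)^2$: as literally printed in the statement the expectation is missing, which would make the right-hand side random and the inequality ill-posed; the corrected reading is the classical form and is also the form the paper uses later when it applies the inequality in the proof of Theorem 3.8 (there the expectation does appear). Second, your closing observation is accurate: the estimate of $E[S_{T\wedge j}^2]$ is just the $L^2$ optional-stopping identity for the martingale $(S_k)_{k\geq m}$ at the bounded stopping time $T\wedge j$, so citing that directly would shorten the argument without changing its substance.
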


\section{The Mean-Field Version of the Multiplicative Ergodic Theorem}\label{MET}

In this section, we will establish  the mean-field version of the multiplicative ergodic theorem. Consider the following McKean-Vlasov SDE starting from deterministic initial value $x\in\R^d$ at $s\geq0$:
\begin{align}\label{mvsdex1}
X_{t}=x+\sum_{k=0}^{d'}\int_s^t V_k(X_r,P_{X_r})\,\rmd W_r^k,\quad t\geq s,
\end{align}
where $P_{X_r}$ is the law of $X_r$, $W_t=(W_t^1,...,W^{d'}_t)$ is a $d'$-dimensional Brownian motion. Under the condition of $V_k\in \cal{C}_{b,Lip}^{1,1}(\R^d\times\cal{P}_2(\R^d);\R^d)(k=0,...,d')$, the existence and uniqueness of solutions to~\eqref{mvsdex1} is guaranteed, which is denoted by $\Phi_{s,t}(x)$. As mentioned earlier, $\Phi_{s,t}(x)$ does not define a flow. The absence of flow properties presents significant challenges. Therefore, before stating our main conclusions, we make some preparations.

\subsection{Some preparations}
We adopt the approach presented in \cite{BLPR} to decompose \eqref{mvsdex1} into two equations, namely, a McKean-Vlasov SDE with random initial value $\xi\in L^2(\Omega,\cal{F},P;\R^d)$ independent of $W_t~(t\geq s)$:
\begin{align}\label{mvsde11}
\Phi_{s,t}(\xi)=\xi+\sum_{k=0}^{d'}\int_s^t V_k(\Phi_{s,r}(\xi),P_{\Phi_{s,r}(\xi)})\,\rmd W_r^k,
\end{align}
and a decoupling classical SDE with the initial distribution $P_{\xi}$ starting from $x$ at $s$:
\begin{align}\label{dsde11}
\Phi_{s,t}^{P_\xi}(x)=x+\sum_{k=0}^{d'}\int_s^t V_k(\Phi_{s,r}^{P_\xi}(x),P_{\Phi_{s,r}(\xi)})\,\rmd W_r^k,
\end{align}
where $t\geq s\geq0$. The condition of $V_k\in \cal{C}_{b,Lip}^{1,1}(\R^d\times\cal{P}_2(\R^d);\R^d)(k=0,...,d')$ implies the existence and uniqueness of solutions to \eqref{mvsde11} and \eqref{dsde11}.
The uniqueness of the solution of the both equations and \cite[Lemma 3.1]{BLPR} imply the following flow property:
\begin{align}\label{flow1}
\Big(\Phi_{r,t}^{P_{r-s}^*P_{\xi}}(\Phi_{s,r}^{P_\xi}(x)),P_{t-r}^*P_{r-s}^*P_{\xi}\Big)=\Big(\Phi_{s,t}^{P_\xi}(x),P_{t-s}^*P_{\xi}\Big),\quad  s\leq r\leq t,
\end{align}
where $P^*_{t}$ is the semigroup of transfer operators generated by autonomous system \eqref{mvsde11}. For McKean-Vlasov SDEs, the solutions and their distributions exist in pairs and are of equal importance. However, we are more concerned with spatial variables in this paper. It can be anticipated that the distribution has a significant impact on our work. Specifically, according to \eqref{flow1}, we consider the following property when $P_\xi=\delta_y$ and $s=0$:
\begin{align}\label{flow}
\Phi_{r,t}^{P_{r}^*\delta_y}(\Phi_{0,r}^{\delta_y}(x))=\Phi_{0,t}^{\delta_y}(x),\quad 0\leq r\leq t.
\end{align}
Moreover, it follows from the uniqueness of the solution of equation \eqref{dsde11} that $\Phi_{0,t}^{P_\xi}(x)|_{x=\xi}=\Phi_{0,t}(\xi)$, which implies that the solution $\Phi_{0,t}(x)$ to \eqref{mvsdex1} with $s=0$ is equal to $\Phi_{0,t}^{\delta_x}(x)$.

Furthermore, let us discuss the differentiability of the mapping $x\mapsto\Phi_{0,t}^{\delta_x}(x)$. In fact, we need to consider a more general case: $x\mapsto\Phi_{s,t}^{P^*_{s}\delta_x}(x)$ $(0\leq s\leq t)$. This requires us to separately consider the differentiability of $x\mapsto\Phi_{s,t}^{P^*_{s}\delta_y}(x)$ and $x\mapsto\Phi_{s,t}^{P^*_{s}\delta_x}(y)$ for any $y\in\R^d$. It should be noted that $\Phi_{s,t}^{P^*_{s}\delta_y}(x)$ is the unique solution of the following decoupling SDE with the initial distribution $P^*_{s}\delta_y$ starting from $x$ at $s\geq0$:
\begin{align}\label{dsde111}
\Phi_{s,t}^{P^*_{s}\delta_y}(x)=x+\sum_{k=0}^{d'}\int_s^t V_k(\Phi_{s,r}^{P^*_{s}\delta_y}(x),P_{\Phi_{0,r}(y)})\,\rmd W_r^k,\quad t\geq s.
\end{align}
By interchanging the positions of $x$ and $y$ in equation \eqref{dsde111}, we obtain the equation satisfied by $\Phi_{s,t}^{P^*_{s}\delta_x}(y)$. Under the condition of $V_k\in \cal{C}_{b,Lip}^{1,1}(\R^d\times\cal{P}_2(\R^d);\R^d)(k=0,...,d')$, the mapping $x\mapsto\Phi_{s,t}^{P^*_{s}\delta_y}(x)$ is $P$-$a.s.$ differentiable for any $y\in\R^d$ and its Jacobian matrix is denoted as $\partial_{x}\Phi_{s,t}^{P^*_{s}\delta_y}(x)$ satisfying the SDE
\begin{align}\label{DE1}
\partial_{x}\Phi_{s,t}^{P_{s}^*\delta_y}(x)&=I+\sum_{k=0}^{d'}\int_{s}^t
\partial_xV_k(\Phi_{s,r}^{P_{s}^*\delta_y}(x),P_{\Phi_{0,r}(y)})\,
\partial_{x}\Phi_{s,r}^{P_{s}^*\delta_y}(x)\,\rmd W^k_r,
\end{align}
where $I:=I_{d\times d}$ is the identity matrix; see \cite[Theorem 3.1, Chapter II]{Kunita} for details. Now, we are going to prove the differentiability of the mapping $x\mapsto\Phi_{s,t}^{P^*_{s}\delta_x}(y)$ for any $s<t$, $y\in\R^d$.
\begin{lemma}\label{lemma3.4}
Assume that $V_0,...,V_{d'}$ belong to $\cal{C}_{b,Lip}^{1,1}(\R^d\times\cal{P}_2(\R^d);\R^d)$. Then there exists a modification of $\Phi_{s,t}^{P^*_{s}\delta_x}(y)$, with the same notation, such that for any $s<t,y\in\R^d$, the mapping $x\mapsto\Phi_{s,t}^{P^*_{s}\delta_x}(y)$ is $P$-$a.s.$ differentiable, and its Jacobian matrix, denoted as $\partial^{x}\Phi_{s,t}^{P^*_{s}\delta_x}(y)$, satisfies the following SDE
\begin{align*}
\partial^{x}\Phi_{s,t}^{P_{s}^*\delta_x}(y)&=\sum_{k=0}^{d'}\int_{s}^t\partial_xV_k(\Phi_{s,r}^{P_{s}^*\delta_x}(y),P_{\Phi_{0,r}(x)})\,
\partial^{x}\Phi_{s,r}^{P_{s}^*\delta_x}(y)\,\rmd W^k_r\\
&\quad+\sum_{k=0}^{d'}\int_{s}^t\widetilde{E}\Big[\partial_\mu V_k(\Phi_{s,r}^{P_{s}^*\delta_x}(y),P_{\Phi_{0,r}(x)},\widetilde{\Phi}_{0,r}(x))\,
\partial_x\widetilde{\Phi}_{0,r}(x)\Big]\,\rmd W^k_r.
\end{align*}
\end{lemma}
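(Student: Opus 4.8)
The plan is to view $y$ as fixed and to treat $x$ as a \emph{parameter} that enters the decoupled equation only through the deterministic curve of measures $\mu^x_r:=P_{\Phi_{0,r}(x)}$, and then to invoke the classical theorem on differentiable dependence of SDE solutions on a parameter, applied to each coefficient $V_k$ together with the chain rule for Lions derivatives. Concretely, interchanging $x$ and $y$ in \eqref{dsde111},
\begin{align*}
\Phi_{s,t}^{P^*_{s}\delta_x}(y)=y+\sum_{k=0}^{d'}\int_s^t V_k\big(\Phi_{s,r}^{P^*_{s}\delta_x}(y),\mu^x_r\big)\,\rmd W^k_r,\qquad t\geq s,
\end{align*}
where $r\mapsto\mu^x_r$ is deterministic, so that $b_k(z,r,x):=V_k(z,\mu^x_r)$ is a deterministic function of $(z,r,x)$. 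I also recall from \cite{BLPR} that, under Assumption~\ref{con}, $x\mapsto\Phi_{0,r}(x)$ is $P$-a.s.\ differentiable with $\sup_{x}E\|\partial_x\Phi_{0,r}(x)\|^{p}<+\infty$ for every $p\geq1$, locally uniformly in $r$, the maps $x\mapsto\Phi_{0,r}(x)$ and $x\mapsto\partial_x\Phi_{0,r}(x)$ being Lipschitz, resp.\ continuous, in $L^2$.

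The core step is to show that $b_k$ is $C^1$ in $(z,x)$ with bounded, locally Lipschitz derivative. In $z$ this is immediate from Assumption~\ref{con}, with $\partial_z b_k(z,r,x)=\partial_x V_k(z,\mu^x_r)$. For the $x$-dependence I apply the chain rule for Lions derivatives (see \cite{BLPR} or \cite{CD1}) to the composition of the Lions-differentiable map $\mu\mapsto V_k(z,\mu)$ with the differentiable family $x\mapsto\Phi_{0,r}(x)$, obtaining
\begin{align*}
\partial_x b_k(z,r,x)=\widetilde{E}\Big[\partial_\mu V_k\big(z,\mu^x_r,\widetilde{\Phi}_{0,r}(x)\big)\,\partial_x\widetilde{\Phi}_{0,r}(x)\Big];
\end{align*}
boundedness follows from $\|\partial_\mu V_k\|\leq K$ and the moment bound above, while local Lipschitz continuity in $(z,x)$, uniform in $r\in[s,t]$, follows from the Lipschitz continuity of $\partial_\mu V_k$ in all its arguments together with the $L^2$-regularity statements above. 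Adjoining the trivial equation $\rmd x=0$, the pair $\big(\Phi_{s,t}^{P^*_{s}\delta_x}(y),x\big)$ solves an SDE whose coefficients $(z,x)\mapsto(b_k(z,r,x),0)$ are $C^1$ in the initial data with bounded, locally Lipschitz derivatives, and continuous in the (harmless) time variable; \cite[Theorem~3.1, Chapter~II]{Kunita} then provides a $P$-a.s.\ differentiable modification of $x\mapsto\Phi_{s,t}^{P^*_{s}\delta_x}(y)$, whose Jacobian solves the variational equation obtained by differentiating the displayed SDE under the stochastic integral. There is no identity term since $\Phi_{s,s}^{P^*_{s}\delta_x}(y)=y$ is independent of $x$, and substituting the formulas for $\partial_z b_k$ and $\partial_x b_k$ reproduces exactly the asserted SDE; its well-posedness and the bound $\sup_{r\leq t}E\|\partial^x\Phi_{s,r}^{P^*_{s}\delta_x}(y)\|^2<+\infty$ are clear, the equation being linear in the unknown with bounded coefficient $\partial_x V_k$ and an $L^2$-bounded forcing term.

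The hard part will be the core step, and within it the verification that $\partial_x b_k$ is bounded and sufficiently regular in $x$: because the measure argument $\mu^x_r$ drifts with time and is only as smooth in $x$ as the $L^2$-regularity of $x\mapsto\Phi_{0,r}(x)$ allows, the moment and continuity estimates for $\partial_x\Phi_{0,r}(x)$ imported from \cite{BLPR} are doing essential work here, in contrast with the classical non-mean-field setting. If one prefers to avoid \cite{Kunita}, an alternative is to establish $L^2$-differentiability directly, proving
$$E\Big|h^{-1}\big(\Phi_{s,t}^{P^*_{s}\delta_{x+he_i}}(y)-\Phi_{s,t}^{P^*_{s}\delta_x}(y)\big)-\partial^x\Phi_{s,t}^{P^*_{s}\delta_x}(y)e_i\Big|^2\to0\quad(h\to0)$$
by the Burkholder--Davis--Gundy and Gronwall inequalities, where the measure increments are controlled through $W_2(\mu^{x+he_i}_r,\mu^x_r)\leq(E|\Phi_{0,r}(x+he_i)-\Phi_{0,r}(x)|^2)^{1/2}=O(h)$, and then upgrading to $P$-a.s.\ differentiability of a modification via Kolmogorov's continuity theorem applied, with sufficiently large moments, to the family of difference quotients.
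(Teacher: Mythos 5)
Your proposal is sound, and its key computation --- once $y$ is fixed, $x$ enters the decoupled equation only through the deterministic measure curve $r\mapsto P_{\Phi_{0,r}(x)}$, so the $x$-derivative of the coefficient is $\widetilde{E}\big[\partial_\mu V_k(z,P_{\Phi_{0,r}(x)},\widetilde{\Phi}_{0,r}(x))\,\partial_x\widetilde{\Phi}_{0,r}(x)\big]$ by the Lions chain rule, and the variational equation carries no identity term because $\Phi_{s,s}^{P^*_s\delta_x}(y)=y$ does not depend on $x$ --- is exactly what produces the SDE in the lemma. The paper itself gives no argument beyond citing the proof of Theorem~3.3 in \cite{CL}, which proceeds by $L^p$ estimates on difference quotients of the solution (controlling the measure increments through $W_2(P_{\Phi_{0,r}(x+he_i)},P_{\Phi_{0,r}(x)})=O(h)$) followed by a Kolmogorov-continuity upgrade to pathwise differentiability; that is precisely the alternative route you sketch in your last paragraph. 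Your primary route --- adjoining $\rmd x=0$ and invoking \cite[Theorem~3.1, Chapter~II]{Kunita} for the augmented system --- is a genuinely different and somewhat slicker packaging: it delegates the flow-regularity work to Kunita at the price of having to verify that $(z,x)\mapsto b_k(z,r,x)$ is $C^1$ with (locally) Lipschitz derivatives uniformly on $[s,t]$. That verification is where the real content sits: boundedness and $L^2$-Lipschitz continuity in $x$ of $\partial_x\Phi_{0,r}(x)$ are needed (the latter via fourth-moment estimates on the derivative flow and Gronwall), and these are exactly the estimates that \cite{BLPR} and \cite{CL} establish under Assumption~\ref{con}; since you import them explicitly, your argument closes. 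In short: correct, with your main route differing from the paper's (omitted, \cite{CL}-style) proof mainly in trading the direct difference-quotient/Kolmogorov argument for a parameter-augmentation appeal to Kunita, and your fallback coinciding with the paper's intended method.
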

\begin{proof}
Since the proof is similar to that of Theorem 3.3 in \cite{CL}, we omit it.
\end{proof}

\begin{rem}
\begin{enumerate}
  \item It should be noted that the derivative of $\Phi_{s,t}^{P_{\xi}}(x)$ with respect to the distribution is in the $L^2$ sense;
  see \cite[Lemma 4.2]{BLPR}. However, since we are considering deterministic initial values, according to Kolmogorov's continuity theorem, we can obtain the differentiability of $x\mapsto\Phi_{s,t}^{P^*_{s}\delta_x}(y)$ in the pathwise sense.
  \item Lemma \ref{lemma3.4} suggests that the Jacobian matrix $\partial_x\Phi_{0,t}(x)$ satisfies
\begin{align}\label{derivatives}
\partial_x\Phi_{0,t}(x)=\partial_x\big[\Phi_{0,t}^{\delta_x}(x)\big]=\partial_x\Phi_{0,t}^{\delta_y}(x)|_{y=x}+\partial^{x}\Phi_{0,t}^{\delta_x}(y)|_{y=x},~~P\text{-}a.s.
\end{align}
\end{enumerate}
\end{rem}

\begin{lemma}\label{lem3.6}
Under the same assumptions as in Lemma \ref{lemma3.4}, for any $p\geq2$, there exists a constant $\widehat{C}(p,K)>0$ such that for any $n\in\mathbb{N},x,y\in\R^d$,
\begin{align*}
&E\sup_{n-1\leq t\leq n}\bigr\|\partial^{x}\Phi_{n-1,t}^{P_{n-1}^*\delta_x}\bigr(\Phi_{0,n-1}^{\delta_x}(y)\bigr)\bigr\|^p\leq \widehat{C}(p,K)^n.
\end{align*}
Specifically, we denote $\widehat{C}(2, K)$ as $C^*$.
\end{lemma}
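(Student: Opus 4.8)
The plan is to obtain the $p$-th moment bound by writing an integral (Gronwall-type) inequality for $\|\partial^{x}\Phi_{n-1,t}^{P_{n-1}^*\delta_x}(\Phi_{0,n-1}^{\delta_x}(y))\|^p$ and iterating it across the unit time intervals $[0,1],[1,2],\dots,[n-1,n]$, so that the per-interval multiplicative constant accumulates to the $n$-th power. First I would abbreviate $Z_t:=\partial^{x}\Phi_{n-1,t}^{P_{n-1}^*\delta_x}(\Phi_{0,n-1}^{\delta_x}(y))$ for $t\in[n-1,n]$ and recall from Lemma~\ref{lemma3.4} that $Z_t$ solves a linear SDE of the form
\begin{align*}
Z_t=Z_{n-1}+\sum_{k=0}^{d'}\int_{n-1}^t A_r^k\,Z_r\,\rmd W_r^k+\sum_{k=0}^{d'}\int_{n-1}^t B_r^k\,\rmd W_r^k,
\end{align*}
where $A_r^k=\partial_xV_k(\cdot,\cdot)$ is uniformly bounded by $K$ (Assumption~\ref{con}), $Z_{n-1}=0$ by the initial condition of the SDE in Lemma~\ref{lemma3.4} (the matrix starts from $0$, not $I$), and the inhomogeneous term is $B_r^k=\widetilde{E}[\partial_\mu V_k(\Phi_{n-1,r}^{P_{n-1}^*\delta_x}(\cdot),P_{\Phi_{0,r}(x)},\widetilde\Phi_{0,r}(x))\,\partial_x\widetilde\Phi_{0,r}(x)]$, whose norm is bounded by $K\,\widetilde E\|\partial_x\widetilde\Phi_{0,r}(x)\|$.

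Next I would control the forcing term: $\partial_x\Phi_{0,r}(x)$ obeys equation \eqref{DE1} (with $s=0$, $y=x$), a linear SDE with bounded coefficient, so a standard Burkholder--Davis--Gundy plus Gronwall argument gives $E\|\partial_x\Phi_{0,r}(x)\|^p\le e^{c(p,K)r}$ for all $r\ge0$; in particular on $[n-1,n]$ this is bounded by $e^{c(p,K)n}=:\bar C(p,K)^n$. Then, applying BDG and Hölder to the SDE for $Z_t$, together with $Z_{n-1}=0$, yields
\begin{align*}
E\sup_{n-1\le u\le t}\|Z_u\|^p\le c_1(p,K)\int_{n-1}^t E\sup_{n-1\le u\le r}\|Z_u\|^p\,\rmd r+c_2(p,K)\int_{n-1}^t E\|\partial_x\Phi_{0,r}(x)\|^p\,\rmd r,
\end{align*}
and since the last integral is at most $\bar C(p,K)^n$, Gronwall's inequality on $[n-1,n]$ (an interval of length $1$) gives $E\sup_{n-1\le t\le n}\|Z_t\|^p\le c_3(p,K)\,\bar C(p,K)^n$. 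Absorbing $c_3(p,K)$ into a slightly larger base yields $E\sup_{n-1\le t\le n}\|Z_t\|^p\le \widehat C(p,K)^n$ for a suitable constant $\widehat C(p,K)>1$, which is the claim; setting $p=2$ defines $C^*=\widehat C(2,K)$.

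The main obstacle is bookkeeping the forcing term cleanly: one must verify that $E\|\partial_x\Phi_{0,r}(x)\|^p$ grows at most exponentially in $r$ uniformly in $x$ (so that restricting to $[n-1,n]$ really produces the $n$-th power of a single constant), and one must be careful that the expectation $\widetilde E$ inside $B_r^k$ and the Fubini/Jensen steps are legitimate — this uses the boundedness of $\partial_\mu V_k$ from Assumption~\ref{con} and the $L^2$-regularity of $\partial_x\widetilde\Phi_{0,r}(x)$ guaranteed by \eqref{DE1}. Everything else (BDG, Hölder, Gronwall) is routine. Note that the interval is deliberately taken to have unit length so that the Gronwall constant per step is independent of $n$, which is exactly what turns the estimate into a geometric factor $\widehat C(p,K)^n$.
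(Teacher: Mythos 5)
Your overall strategy is the same as the paper's: bound the $p$-th moment of the forcing term, then run BDG/H\"older/Gronwall on the unit interval for the inhomogeneous linear SDE with zero initial value from Lemma \ref{lemma3.4}, and absorb constants into a geometric factor $\widehat C(p,K)^n$. However, there is a genuine gap in how you justify the forcing estimate. You claim that $\partial_x\Phi_{0,r}(x)$ ``obeys equation \eqref{DE1} (with $s=0$, $y=x$), a linear SDE with bounded coefficient.'' That is not correct: \eqref{DE1} governs $\partial_x\Phi_{s,t}^{P_s^*\delta_y}(x)$, the Jacobian of the decoupled flow with the distribution frozen, whereas the quantity appearing in the forcing $B^k_r$ of Lemma \ref{lemma3.4} is $\partial_x\widetilde\Phi_{0,r}(x)$, the Jacobian of the full McKean--Vlasov solution. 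By \eqref{derivatives} this is the sum of the decoupled Jacobian and $\partial^{x}\Phi_{0,r}^{\delta_x}(y)|_{y=x}$, and its SDE contains the additional term $\widetilde E\big[\partial_\mu V_k(\Phi_{0,r}(x),P_{\Phi_{0,r}(x)},\widetilde\Phi_{0,r}(x))\,\partial_x\widetilde\Phi_{0,r}(x)e_i\big]$, i.e.\ the equation is self-referential through the expectation of the unknown. So the ``standard BDG plus Gronwall for a bounded-coefficient homogeneous linear SDE'' argument does not apply as stated; one must first close a Gronwall inequality for $E\sup_{u\le r}|\partial_x\Phi_{0,u}(x)e_i|^p$ in which the mean-field term is dominated (via boundedness of $\partial_\mu V_k$ from Assumption \ref{con} and H\"older/Jensen) by the same supremum moment. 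This is precisely the content of the It\^o-formula computation that occupies the first, longer half of the paper's proof, and it is the step your proposal glosses over; the conclusion $E\sup_{t\in[n-1,n]}\|\partial_x\Phi_{0,t}(x)\|^p\le C(d,p)e^{C(p,K)n}$ is true, but it needs this argument, not a citation of \eqref{DE1}.

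Once that bound is in place, the remainder of your argument matches the paper: the equation for $\partial^{x}\Phi_{n-1,t}^{P_{n-1}^*\delta_x}(z)$ has zero initial value, bounded coefficient $\partial_x V_k$, and forcing controlled by the above estimate, so Gronwall on an interval of length one gives the geometric bound. One small further point: the paper first proves the estimate for a deterministic starting point $z$, uniformly in $z$, and then substitutes the random argument $z=\Phi_{0,n-1}^{\delta_x}(y)$ by conditioning (using independence of the increments on $[n-1,n]$); writing the SDE directly with the random initial point, as you do, requires at least a brief substitution/conditioning justification, which you should add.
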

\begin{proof}
For the sake of simplicity, assume $d'=1$. We first estimate the $p$-moment of the Jacobian matrix $\partial_x\Phi_{0,t}(x)$. For any $i\in\{1,...,d\}$, $e_{i}$ is the unit vector $(0,...,0,1,0,...,0)$ with $1$ as the $i$-th component. Then $\partial_x\Phi_{0,t}(x)e_i$ satisfies
\begin{align*}
\partial_x\Phi_{0,t}(x)e_i=e_i&+\sum_{k=0}^1\int_0^t \partial_x V_k(\Phi_{0,r}(x),P_{\Phi_{0,r}(x)})\cdot \partial_x\Phi_{0,r}(x)e_i\,\rmd W_r^k\\
&+\sum_{k=0}^1\int_0^t\widetilde{E}\bigr[\partial_{\mu}V_k(\Phi_{0,r}(x),P_{\Phi_{0,r}(x)},\widetilde{\Phi}_{0,r}(x))\cdot
\partial_x\widetilde{\Phi}_{0,r}(x)e_i\bigr]\,\rmd W_r^k.
\end{align*}
See \cite{CL} for more details. Applying the It\^o formula, we have
\begin{align*}
|\partial_x\Phi&_{0,t}(x)e_i|^p\\
=1&+p\int_0^t|\partial_x\Phi_{0,r}(x)e_i|^{p-2}\Big\langle\partial_x\Phi_{0,r}(x)e_i,\partial_x V_0(\Phi_{0,r}(x),P_{\Phi_{0,r}(x)})\cdot \partial_x\Phi_{0,r}(x)e_i\\
&\qquad\qquad\qquad\qquad\qquad+\widetilde{E}\bigr[\partial_{\mu}V_0(\Phi_{0,r}(x),P_{\Phi_{0,r}(x)},\widetilde{\Phi}_{0,r}(x))\cdot
\partial_x\widetilde{\Phi}_{0,r}(x)e_i\bigr]\Big\rangle \rmd r\\
&+p\int_0^t|\partial_x\Phi_{0,r}(x)e_i|^{p-2}\Big\langle\partial_x\Phi_{0,r}(x)e_i,\partial_x V_1(\Phi_{0,r}(x),P_{\Phi_{0,r}(x)})\cdot \partial_x\Phi_{0,r}(x)e_i\\
&\qquad\qquad\qquad\qquad\qquad+\widetilde{E}\bigr[\partial_{\mu}V_1(\Phi_{0,r}(x),P_{\Phi_{0,r}(x)},\widetilde{\Phi}_{0,r}(x))\cdot
\partial_x\widetilde{\Phi}_{0,r}(x)e_i\bigr]\,\rmd W_r^1\Big\rangle\\
&+\frac{p}{2}\int_0^t|\partial_x\Phi_{0,r}(x)e_i|^{p-2}\Big|\partial_x V_1(\Phi_{0,r}(x),P_{\Phi_{0,r}(x)})\cdot \partial_x\Phi_{0,r}(x)e_i\\
&\qquad\qquad\qquad\qquad\qquad+\widetilde{E}\bigr[\partial_{\mu}V_1(\Phi_{0,r}(x),P_{\Phi_{0,r}(x)},\widetilde{\Phi}_{0,r}(x))\cdot
\partial_x\widetilde{\Phi}_{0,r}(x)e_i\bigr]\Big|^2\rmd r\\
&+\frac{p(p-2)}{2}\int_0^t|\partial_x\Phi_{0,r}(x)e_i|^{p-4}\Big|\Big(\partial_x\Phi_{0,r}(x)e_i\Big)^T\Big(\partial_x V_1(\Phi_{0,r}(x),P_{\Phi_{0,r}(x)})\cdot \partial_x\Phi_{0,r}(x)e_i\\
&\qquad\qquad\qquad\qquad\qquad+\widetilde{E}\bigr[\partial_{\mu}V_1(\Phi_{0,r}(x),P_{\Phi_{0,r}(x)},\widetilde{\Phi}_{0,r}(x))\cdot
\partial_x\widetilde{\Phi}_{0,r}(x)e_i\bigr]\Big)\Big|^2\rmd r.
\end{align*}
After a simple calculation, we obtain
\begin{align*}
E&\sup_{t\in[0,n]}|\partial_x\Phi_{0,t}(x)e_i|^p\\
=&1+pE\sup_{t\in[0,n]}\int_0^t|\partial_x\Phi_{0,r}(x)e_i|^{p-2}\Big\langle\partial_x\Phi_{0,r}(x)e_i,\partial_x V_0(\Phi_{0,r}(x),P_{\Phi_{0,r}(x)})\cdot \partial_x\Phi_{0,r}(x)e_i\\
&\qquad\qquad\qquad\qquad\qquad+\widetilde{E}\bigr[\partial_{\mu}V_0(\Phi_{0,r}(x),P_{\Phi_{0,r}(x)},\widetilde{\Phi}_{0,r}(x))\cdot
\partial_x\widetilde{\Phi}_{0,r}(x)e_i\bigr]\Big\rangle \rmd r\\
&+pE\sup_{t\in[0,n]}\int_0^t|\partial_x\Phi_{0,r}(x)e_i|^{p-2}\Big\langle\partial_x\Phi_{0,r}(x)e_i,\partial_x V_1(\Phi_{0,r}(x),P_{\Phi_{0,r}(x)})\cdot \partial_x\Phi_{0,r}(x)e_i\\
&\qquad\qquad\qquad\qquad\qquad+\widetilde{E}\bigr[\partial_{\mu}V_1(\Phi_{0,r}(x),P_{\Phi_{0,r}(x)},\widetilde{\Phi}_{0,r}(x))\cdot
\partial_x\widetilde{\Phi}_{0,r}(x)e_i\bigr]\,\rmd W_r^1\Big\rangle\\
&+\frac{p}{2}E\sup_{t\in[0,n]}\int_{0}^t|\partial_x\Phi_{0,r}(x)e_i|^{p-2}\Big|\partial_x V_1(\Phi_{0,r}(x),P_{\Phi_{0,r}(x)})\cdot \partial_x\Phi_{0,r}(x)e_i\\
&\qquad\qquad\qquad\qquad\qquad+\widetilde{E}\bigr[\partial_{\mu}V_1(\Phi_{0,r}(x),P_{\Phi_{0,r}(x)},\widetilde{\Phi}_{0,r}(x))\cdot
\partial_x\widetilde{\Phi}_{0,r}(x)e_i\bigr]\Big|^2\rmd r\\
&+\frac{p(p-2)}{2}E\sup_{t\in[0,n]}\int_{0}^t|\partial_x\Phi_{0,r}(x)e_i|^{p-4}\Big|\Big(\partial_x\Phi_{0,r}(x)e_i\Big)^T\Big(\partial_x V_1(\Phi_{0,r}(x),P_{\Phi_{0,r}(x)})\cdot \partial_x\Phi_{0,r}(x)e_i\\
&\qquad\qquad\qquad\qquad\qquad+\widetilde{E}\bigr[\partial_{\mu}V_1(\Phi_{0,r}(x),P_{\Phi_{0,r}(x)},\widetilde{\Phi}_{0,r}(x))\cdot
\partial_x\widetilde{\Phi}_{0,r}(x)e_i\bigr]\Big)\Big|^2\rmd r\\
\leq&1+2\big(p(p-1)K^2+pK\big)\int_0^nE\sup_{u\in[0,r]}|\partial_x\Phi_{0,u}(x)e_i|^{p} \rmd r\\
&+pE\sup_{t\in[0,n]}\Big|\int_0^t|\partial_x\Phi_{0,r}(x)e_i|^{p-2}\Big\langle\partial_x\Phi_{0,r}(x)e_i,\partial_x V_1(\Phi_{0,r}(x),P_{\Phi_{0,r}(x)})\cdot \partial_x\Phi_{0,r}(x)e_i\\
&\qquad\qquad\qquad\qquad\qquad+\widetilde{E}\bigr[\partial_{\mu}V_1(\Phi_{0,r}(x),P_{\Phi_{0,r}(x)},\widetilde{\Phi}_{0,r}(x))\cdot
\partial_x\widetilde{\Phi}_{0,r}(x)e_i\bigr]\,\rmd W_r^1\Big\rangle\Big|.
\end{align*}
We now estimate the last term, obtaining
\begin{align*}
&pE\sup_{t\in[0,n]}\Bigr|\int_{0}^t|\partial_x\Phi_{0,r}(x)e_i|^{p-2}\Big\langle\partial_x\Phi_{0,r}(x)e_i,\partial_x V_1(\Phi_{0,r}(x),P_{\Phi_{0,r}(x)})\cdot \partial_x\Phi_{0,r}(x)e_i\\
&\qquad\qquad\qquad\qquad\qquad+\widetilde{E}\bigr[\partial_{\mu}V_1(\Phi_{0,r}(x),P_{\Phi_{0,r}(x)},\widetilde{\Phi}_{0,r}(x))\cdot
\partial_x\widetilde{\Phi}_{0,r}(x)e_i\bigr]\,\rmd W_r^1\Big\rangle\Bigr|\\
\leq&pE\Bigr(\Bigr|\int_{0}^n|\partial_x\Phi_{0,r}(x)e_i|^{p-2}\Big\langle\partial_x\Phi_{0,r}(x)e_i,\partial_x V_1(\Phi_{0,r}(x),P_{\Phi_{0,r}(x)})\cdot \partial_x\Phi_{0,r}(x)e_i\\
&\qquad\qquad\qquad\qquad\qquad+\widetilde{E}\bigr[\partial_{\mu}V_1(\Phi_{0,r}(x),P_{\Phi_{0,r}(x)},\widetilde{\Phi}_{0,r}(x))\cdot
\partial_x\widetilde{\Phi}_{0,r}(x)e_i\bigr]\,\rmd W_r^1\Big\rangle\Bigr|^2\Bigr)^{\frac{1}{2}}\\
=&pE\Bigr(\int_{0}^n|\partial_x\Phi_{0,r}(x)e_i|^{2p-4}\Bigr|\Big\langle\partial_x\Phi_{0,r}(x)e_i,\partial_x V_1(\Phi_{0,r}(x),P_{\Phi_{0,r}(x)})\cdot \partial_x\Phi_{0,r}(x)e_i\\
&\qquad\qquad\qquad\qquad\qquad+\widetilde{E}\bigr[\partial_{\mu}V_1(\Phi_{0,r}(x),P_{\Phi_{0,r}(x)},\widetilde{\Phi}_{0,r}(x))\cdot
\partial_x\widetilde{\Phi}_{0,r}(x)e_i\bigr]\Big\rangle\Bigr|^2\,\rmd r\Bigr)^{\frac{1}{2}}\\
\leq&pE\Bigr[\sup_{t\in[0,n]}|\partial_x\Phi_{0,r}(x)e_i|^\frac{p}{2}\Bigr(\int_{0}^n|\partial_x\Phi_{0,r}(x)e_i|^{p-4}\Bigr|\Big\langle\partial_x\Phi_{0,r}(x)e_i,\partial_x V_1(\Phi_{0,r}(x),P_{\Phi_{0,r}(x)})\cdot \partial_x\Phi_{0,r}(x)e_i\\
&\qquad\qquad\qquad\qquad\qquad+\widetilde{E}\bigr[\partial_{\mu}V_1(\Phi_{0,r}(x),P_{\Phi_{0,r}(x)},\widetilde{\Phi}_{0,r}(x))\cdot
\partial_x\widetilde{\Phi}_{0,r}(x)e_i\bigr]\Big\rangle\Bigr|^2\,\rmd r\Bigr)^{\frac{1}{2}}\Bigr]\\
\leq&\frac{E\sup_{t\in[0,n]}|\partial_x\Phi_{0,r}(x)e_i|^p}{2}+\frac{p^2}{2}E\Bigr(\int_{0}^n|\partial_x\Phi_{0,r}(x)e_i|^{p-4}\Bigr|\Big\langle\partial_x\Phi_{0,r}(x)e_i,\\
&\quad\partial_x V_1(\Phi_{0,r}(x),P_{\Phi_{0,r}(x)})\cdot \partial_x\Phi_{0,r}(x)e_i+\widetilde{E}\bigr[\partial_{\mu}V_1(\Phi_{0,r}(x),P_{\Phi_{0,r}(x)},\widetilde{\Phi}_{0,r}(x))\cdot
\partial_x\widetilde{\Phi}_{0,r}(x)e_i\bigr]\Big\rangle\Bigr|^2\,\rmd r\Bigr)\\
\leq&\frac{E\sup_{t\in[0,n]}|\partial_x\Phi_{0,r}(x)e_i|^p}{2}+2p^2K^2\int_{0}^nE\sup_{u\in[0,r]}|\partial_x\Phi_{0,u}(x)e_i|^{p}\rmd r.
\end{align*}
Further,
\begin{align*}
E\sup_{t\in[0,n]}|\partial_x\Phi_{0,t}(x)e_i|^p\leq1+4\Big(p(p-1)K^2+2p^2K^2+pK\Big)\int_0^nE\sup_{u\in[0,r]}|\partial_x\Phi_{0,u}(x)e_i|^{p} \rmd r.
\end{align*}
Set
\begin{align}\label{constant}
4\Big(p(p-1)K^2+2p^2K^2+pK\Big)=:C(p,K).
\end{align}
The Gronwall inequality implies
\begin{align*}
&E\sup_{t\in[0,n]}|\partial_x\Phi_{0,t}(x)e_i|^p
\leq e^{C(p,K)n}.
\end{align*}
Therefore, for any $n\geq1$,
\begin{align}\label{p-estimate}
E\sup_{t\in[n-1,n]}\|\partial_x\Phi_{0,t}(x)\|^p\leq C(d,p)e^{C(p,K)n},
\end{align}
where $C(d,p)>0$.

Next, we proceed to estimate the $p$-moment of $\partial^{x}\Phi_{n-1,t}^{P_{n-1}^*\delta_x}(z)$. It follows from Lemma \ref{lemma3.4} that
\begin{align*}
\partial^{x}\Phi_{n-1,t}^{P_{n-1}^*\delta_x}(z)&=\sum_{k=0}^1\int_{n-1}^t\partial_{x}V_k(\Phi_{n-1,r}^{P_{n-1}^*\delta_x}(z),P_{\Phi_{0,r}(x)})\,\partial^{x}\Phi_{n-1,r}^{P_{n-1}^*\delta_x}(z)\,\rmd W_r^k\\
&\quad+\sum_{k=0}^1\int_{n-1}^t\widetilde{E}\Big[\partial_\mu V_k(\Phi_{n-1,r}^{P_{n-1}^*\delta_x}(z),P_{\Phi_{0,r}(x)},\widetilde{\Phi}_{0,r}(x))\,\partial_x\widetilde{\Phi}_{0,r}(x)\Big]\,\rmd W_r^k.
\end{align*}
By the martingale inequality, the BDG inequality, the H\"older inequality and the boundedness of the coefficients, we obtain that there exists a constant $C'(p,K,d)>0$ such that
\begin{align*}
&E\sup_{n-1\leq t\leq n}\bigr\|\partial^{x}\Phi_{n-1,t}^{P_{n-1}^*\delta_x}(z)\bigr\|^p\\
&\leq C'(p,K,d)e^{C(p,K)n}+C'(p,K,d)\int_{n-1}^nE\sup_{n-1\leq u\leq r}\bigr\|\partial^{x}\Phi_{n-1,u}^{P_{n-1}^*\delta_x}(z)\bigr\|^p\rmd r.
\end{align*}
It follows from the Gronwall inequality that
\begin{align*}
E\sup_{n-1\leq t\leq n}\bigr\|\partial^{x}\Phi_{n-1,t}^{P_{n-1}^*\delta_x}(z)\bigr\|^p\leq C'(p,K,d)e^{C(p,K)n}e^{\int_{n-1}^nC'(p,K,d)\rmd r}\leq \widetilde{C}(p,K,d)e^{C(p,K)n}.
\end{align*}

Hence,
\begin{align*}
&E\sup_{n-1\leq t\leq n}\bigr\|\partial^{x}\Phi_{n-1,t}^{P_{n-1}^*\delta_x}(\Phi_{0,n-1}^{\delta_x}(y))\bigr\|^p\\
&\leq E\Big[E\sup_{n-1\leq t\leq n}\bigr\|\partial^{x}\Phi_{n-1,t}^{P_{n-1}^*\delta_x}(z)\bigr\|^p_{z=\Phi_{0,n-1}^{\delta_x}(y)}\Big]\leq \widetilde{C}(p,K,d)e^{C(p,K)n}=:\widehat{C}(p,K,d)^n.
\end{align*}
The proof is complete.
\end{proof}

Below, we state another lemma used in the proof of our main conclusion. Consider the decoupling SDE with the initial distribution $\delta_y$ starting from $x$ at $0$:
\begin{align}\label{dsdex}
\Phi_{0,t}^{\delta_y}(x)=x+\sum_{k=0}^{d'}\int_0^tV_k(\Phi_{0,r}^{\delta_y}(x),P_{\Phi_{0,r}(y)})\rmd W_r^k.
\end{align}
As mentioned earlier, under the condition of $V_k\in \cal{C}_{b,Lip}^{1,1}(\R^d\times\cal{P}_2(\R^d);\R^d)(k=0,...,d')$, the mapping $x\mapsto\Phi_{0,t}^{\delta_y}(x)$ is $P$-$a.s.$ differentiable, and its Jacobian matrix is denoted by $\partial_{x}\Phi_{0,t}^{\delta_y}(x)$.  For fixed $x,y\in\R^d$, we set
$$A_k(t):=\partial_{x}V_k(\Phi_{0,t}^{\delta_y}(x),P_{\Phi_{0,t}(y)}).$$
Consider the following linear SDE:
\begin{align*}
Y_t&=I+\sum_{k=0}^{d'}\int_s^tA_k(r)\,Y_r\,\rmd W_r^k,\quad 0\leq s\leq t.
\end{align*}
We denote the fundamental solution matrix by $\Psi_{s,t}(s\leq t)$. Then it satisfies that $\Psi_{s,t}=\partial_{x}\Phi_{s,t}^{P_s^*\delta_y}(\Phi_{0,s}^{\delta_y}(x))$ and $\Psi_{s,t}=\Psi_{r,t}\cdot\Psi_{s,r}$ with $0\leq s\leq r\leq t$.

\begin{lemma}\label{lem2}
For the two-parameter flow $\Psi_{s,t}$, there exists $M>0$ such that for any $n\in\mathbb{N}$, the following inequality holds:
$$P\Big(\Big\{\omega\in\Omega|\sup_{n\leq t\leq n+1}\|\Psi_{n,t}(\omega)\|\geq n+d^2e^K\Big\}\Big)\leq \frac{M}{n^2}.$$
\end{lemma}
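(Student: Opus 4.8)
The plan is to control $\sup_{n\le t\le n+1}\|\Psi_{n,t}\|$ through a moment bound followed by Markov's inequality, exploiting the fact that $\Psi_{n,t}$ is the Jacobian of a \emph{classical} (decoupling) SDE over the unit time window $[n,n+1]$, whose coefficient matrices $A_k(r)=\partial_x V_k(\Phi_{0,r}^{\delta_y}(x),P_{\Phi_{0,r}(y)})$ are bounded in norm by $K$ uniformly in $r,x,y$ (Assumption~\ref{con}). The key point is that the bound $n+d^2e^K$ in the event has a fixed deterministic piece $d^2e^K$ which should absorb the ``typical'' size of $\Psi_{n,t}$, so that exceeding $n+d^2e^K$ is a large-deviation-type event whose probability decays like $n^{-2}$; hence a second-moment estimate suffices and we do not need to track $p$-dependence.

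First I would record, via It\^o's formula applied to $|\Psi_{n,t}e_i|^2$ exactly as in the proof of Lemma~\ref{lem3.6} (but now with \emph{no} $\partial_\mu$ term, since $\Psi$ solves a genuine classical linear SDE), the identity
\begin{align*}
|\Psi_{n,t}e_i|^2 = 1 + 2\sum_{k=0}^{d'}\int_n^t \langle \Psi_{n,r}e_i, A_k(r)\Psi_{n,r}e_i\rangle\,\rmd W_r^k + \sum_{k=1}^{d'}\int_n^t |A_k(r)\Psi_{n,r}e_i|^2\,\rmd r,
\end{align*}
where the $k=0$ term is a drift. Using $\|A_k(r)\|\le K$, taking $E\sup_{n\le t\le \cdot}$, and estimating the martingale part by Burkholder--Davis--Gundy and Young's inequality in the standard way gives $E\sup_{n\le t\le n+1}|\Psi_{n,t}e_i|^2 \le e^{cK^2}$ for an absolute constant $c$ (the interval has length $1$, which is why there is no growing factor); summing over $i=1,\dots,d$ and comparing $\|\cdot\|$ with the Hilbert--Schmidt norm gives $E\sup_{n\le t\le n+1}\|\Psi_{n,t}\|^2 \le d^2 e^{cK^2} =: M$ (one may also simply cite \cite[Theorem 3.1, Chapter II]{Kunita} for such a unit-time moment bound; note $\Psi_{n,t}=\partial_x\Phi_{n,t}^{P_n^*\delta_y}(\Phi_{0,n}^{\delta_y}(x))$, which is $\mathcal{F}_{[0,\infty)}$-measurable but the bound is uniform). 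Crucially the constant $M$ does \emph{not} depend on $n$, because the generating coefficients are bounded by $K$ independently of the distributional argument.

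Then I would apply Markov's inequality: since $n+d^2e^K\ge n$,
\begin{align*}
P\Big(\sup_{n\le t\le n+1}\|\Psi_{n,t}\|\ge n+d^2e^K\Big) \le P\Big(\sup_{n\le t\le n+1}\|\Psi_{n,t}\|\ge n\Big) \le \frac{E\sup_{n\le t\le n+1}\|\Psi_{n,t}\|^2}{n^2} \le \frac{M}{n^2},
\end{align*}
which is exactly the claimed bound (for $n=0$ the statement is vacuous or trivially $\le M$ after adjusting $M$). The role of the extra constant $d^2e^K$ is purely cosmetic here — it is there so that the complementary events are cleanly usable in a Borel--Cantelli argument later — so it can simply be discarded in favour of the weaker threshold $n$.

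The main obstacle is essentially bookkeeping rather than conceptual: one must make sure the moment bound is genuinely \emph{uniform in $n$} (and in the frozen parameters $x,y$), which hinges on the fact that $A_k(\cdot)$ is bounded by $K$ regardless of how the law $P_{\Phi_{0,r}(y)}$ drifts in time — unlike the situation for $\partial^x\Phi$ in Lemma~\ref{lem3.6}, here the extra $\partial_\mu$ forcing term is absent, so there is no exponential-in-$n$ factor and no accumulation. A minor care point is the measurability/adaptedness needed to apply BDG on $[n,n+1]$: one works with the filtration generated by the Brownian increments after time $n$ (equivalently uses the flow property $\Psi_{n,t}=\partial_x\Phi_{n,t}^{P_n^*\delta_y}(\Phi_{0,n}^{\delta_y}(x))$ and the fact that, for fixed starting point, this solves an SDE driven by $(W_r-W_n)_{r\ge n}$ with coefficients bounded by $K$), after which all the It\^o-calculus steps are routine and identical in spirit to the computation already carried out in the proof of Lemma~\ref{lem3.6}.
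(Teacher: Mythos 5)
Your argument is correct, but it is not what the paper does: the paper's entire proof of this lemma is a one-line citation of \cite[Lemma 3.3]{Cong}, which is exactly the corresponding estimate for the fundamental matrix of a nonautonomous linear SDE with coefficients bounded by $K$. What you have written is, in effect, a self-contained reconstruction of that cited lemma: since $\Psi_{n,\cdot}$ solves a genuine linear SDE whose matrices $A_k(r)$ are adapted and bounded by $K$ (no $\partial_\mu$ forcing term, unlike Lemma~\ref{lem3.6}), a unit-window It\^o/BDG/Gronwall computation gives $E\sup_{n\le t\le n+1}\|\Psi_{n,t}\|^2\le M$ uniformly in $n$, $x$, $y$, and then Markov's inequality with the threshold relaxed from $n+d^2e^K$ to $n$ yields the claimed $M/n^2$ bound (the specific constant $d^2e^K$ in the statement is inherited from Cong's formulation, where one centers at a bound for the mean before applying Chebyshev; as you note, for the inequality as stated it can simply be discarded). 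Your route buys transparency — it makes explicit that the uniformity in $n$ comes solely from boundedness of $\partial_x V_k$ and the unit length of the window, independent of how $P_{\Phi_{0,r}(y)}$ drifts — at the cost of redoing a standard estimate the paper outsources. One small caveat in your discussion: the coefficients $A_k(r)$ on $[n,n+1]$ are not functionals of the post-$n$ increments alone (they involve $\Phi_{0,r}^{\delta_y}(x)$, hence the whole path up to $r$), but this is harmless, since the moment estimate only needs adaptedness and the uniform bound $\|A_k\|\le K$, not any independence from $\mathcal{F}_n$.
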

\begin{proof}
The lemma follows directly from \cite[Lemma 3.3]{Cong}.
\end{proof}

\subsection{Main results}
After considering the aforementioned facts, let us begin establishing the multiplicative ergodic theorem. Consider the following McKean-Vlasov SDE with deterministic initial value $x\in\R^d$:
\begin{align}\label{mvsdex}
X_{t}=x+\sum_{k=0}^{d'}\int_0^t V_k(X_r,P_{X_r})\,\rmd W_r^k.
\end{align}
As mentioned earlier, under the condition of $V_k\in \cal{C}_{b,Lip}^{1,1}(\R^d\times\cal{P}_2(\R^d);\R^d)(k=0,...,d')$, the existence and uniqueness of solutions to \eqref{mvsdex} can be guaranteed, still denoted as $\Phi_{0,t}(x)$. Moreover, the mapping $x\mapsto\Phi_{0,t}(x)$ is $P$-$a.s.$ differentiable. Its Jacobian matrix is denoted as $\partial_x\Phi_{0,t}(x)$. Then the theorem is stated as follows:

\begin{thm}[The mean-field version of the multiplicative ergodic theorem]\label{MET1}
Assume that coefficients $V_0,...,V_{d'}$ in equation~\eqref{mvsdex} belong to $\cal{C}_{b,Lip}^{1,1}(\R^d\times\cal{P}_2(\R^d);\R^d)$. Then for some constant $\kappa>0$, the following facts hold: for any $x\in\R^d$, there exists a set $\Omega(x)\subset\Omega$ of full $P$-measure such that for any $\omega\in\Omega(x)$, there exist $r(\omega,x)~(\leq d)$ linear subspaces of $\R^d$
$$\{0\}=V_0\subset V_1(\omega,x)\subset...\subset V_{r(\omega,x)}(\omega,x)=\R^d$$
and numbers
$$-\infty\leq\lambda_1(\omega,x)<\lambda_2(\omega,x)<...<\lambda_{r(\omega,x)}(\omega,x)\leq\kappa$$
such that
for any $v\in V_i(\omega,x)\setminus V_{i-1}(\omega,x)~(1\leq i\leq r(\omega,x))$,
\begin{align}\label{LE}
\limsup_{t\rightarrow+\infty}\frac{1}{t}\log|\partial_x\Phi_{0,t}(\omega,x)v|=\lambda_i(\omega,x).
\end{align}
\end{thm}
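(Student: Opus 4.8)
The plan is to fix $x\in\R^d$ and work entirely along individual trajectories, reducing the statement to a filtration-of-subspaces argument of Furstenberg--Kesten/Oseledets type applied directly to the (non-cocyclic) family of matrices $M_t(\omega):=\partial_x\Phi_{0,t}(\omega,x)$. The key structural input is the decomposition \eqref{derivatives}, which lets us write $M_t = \partial_x\Phi_{0,t}^{\delta_y}(x)|_{y=x} + \partial^x\Phi_{0,t}^{\delta_x}(y)|_{y=x}$, so that $\lambda(\omega,x,v)\le\max\{\lambda_1(\omega,x,v),\lambda_2(\omega,x,v)\}$ as displayed in the introduction. First I would establish the uniform upper bound $\kappa$: for $\lambda_1$, the relevant object is the fundamental matrix $\Psi_{0,t}$ of a \emph{classical} linear SDE (the family $A_k(t)$ introduced before Lemma~\ref{lem2}), and Lemma~\ref{lem2} together with Borel--Cantelli gives $\limsup_{t\to\infty}\frac1t\log\|\Psi_{n,t}\|$ controlled on each unit interval, hence $\lambda_1(\omega,x,v)\le\kappa_1$ for a.e.\ $\omega$ after summing the geometric contributions; for $\lambda_2$, one uses Lemma~\ref{lem3.6} with $p=2$, the constant $C^*=\widehat C(2,K)$, and the Chebyshev/Borel--Cantelli route, mimicking the strong law of large numbers via the H\'ajek--R\'enyi inequality (as the introduction indicates) since the distributional shift destroys the i.i.d.\ structure. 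Combining, $\lambda(\omega,x,v)\le\kappa:=\max\{\kappa_1,\kappa_2\}$ for all $v\ne0$ on a full-measure set $\Omega(x)$.

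Second, I would construct the filtration. Define, for $v\in\R^d\setminus\{0\}$, the function $\lambda(\omega,x,v)=\limsup_{t\to\infty}\frac1t\log|M_t(\omega)v|$ and set $\lambda(\omega,x,0):=-\infty$. The standard observations are: (i) $\lambda(\omega,x,cv)=\lambda(\omega,x,v)$ for $c\ne0$; (ii) $\lambda(\omega,x,v+w)\le\max\{\lambda(\omega,x,v),\lambda(\omega,x,w)\}$, with equality whenever $\lambda(\omega,x,v)\ne\lambda(\omega,x,w)$. Property (ii) follows from $|M_t(v+w)|\le|M_tv|+|M_tw|$ and the elementary fact $\limsup\frac1t\log(a_t+b_t)=\max\{\limsup\frac1t\log a_t,\limsup\frac1t\log b_t\}$ for nonnegative $a_t,b_t$, where the reverse inequality in the non-equal case uses that the larger exponent cannot be cancelled. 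From (i)--(ii), for fixed $\omega\in\Omega(x)$ the sets $V(\lambda):=\{v:\lambda(\omega,x,v)\le\lambda\}\cup\{0\}$ are linear subspaces of $\R^d$, nested and nondecreasing in $\lambda$, with $V(\lambda)=\{0\}$ for $\lambda$ very negative and $V(\lambda)=\R^d$ for $\lambda\ge\kappa$. Since $\R^d$ is finite-dimensional, the function $\lambda\mapsto\dim V(\lambda)$ is a nondecreasing step function taking at most $d$ jumps; the jump locations are exactly the distinct values $\lambda_1(\omega,x)<\dots<\lambda_{r(\omega,x)}(\omega,x)$ (with $r(\omega,x)\le d$), the subspaces $V_i(\omega,x):=V(\lambda_i(\omega,x))$ form the claimed flag, and $\lambda_{r}(\omega,x)\le\kappa$ by the upper bound while $\lambda_1(\omega,x)\ge-\infty$ trivially. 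For $v\in V_i\setminus V_{i-1}$ one has $\lambda(\omega,x,v)\le\lambda_i$ by definition of $V_i$, and $\lambda(\omega,x,v)>\lambda_{i-1}$ since $v\notin V_{i-1}$; because the only value in $(\lambda_{i-1},\lambda_i]$ that is attained is $\lambda_i$ (here one must verify the limsup-exponent set of values is exactly the jump set, which follows because if $v\notin V(\lambda)$ then $\lambda(\omega,x,v)$ equals the smallest $\lambda_j>\lambda$, by decomposing $v$ along a basis adapted to the flag and using equality in (ii)), we get $\lambda(\omega,x,v)=\lambda_i$, which is \eqref{LE}.

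The measurability of $\omega\mapsto V_i(\omega,x)$, $\lambda_i(\omega,x)$, $r(\omega,x)$ is routine: $M_t(\omega)$ is jointly measurable, $\lambda(\omega,x,v)$ is a countable limsup of continuous functions hence measurable in $(\omega,v)$, and the flag data are obtained by elementary operations on these; I would state this briefly rather than belabor it.

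\textbf{Main obstacle.} The genuinely delicate point is the upper bound on $\lambda_2(\omega,x,v)$, i.e.\ the exponent of the ``distributional'' part $\partial^x\Phi_{0,t}^{\delta_x}(y)|_{y=x}$. Because $\Phi_{0,t}$ is not a flow, there is no multiplicative cocycle to feed into Furstenberg--Kesten, and the shift in the measure argument $P_{\Phi_{0,r}(x)}$ means the increments over successive unit intervals are neither independent nor stationary, so Cantelli's SLLN does not apply directly. The remedy is to run the telescoping $\partial^x\Phi_{0,n}^{\delta_x}(y)|_{y=x}$ through the intermediate-time expressions $\partial^x\Phi_{n-1,n}^{P_{n-1}^*\delta_x}(\Phi_{0,n-1}^{\delta_x}(y))|_{y=x}$ controlled by Lemma~\ref{lem3.6}, take logarithms to linearize, and then invoke the H\'ajek--R\'enyi inequality on the resulting sum of log-norms (after centering) to get an a.s.\ bound on the Ces\`aro averages — this is the ``imitating the proof of the strong law of large numbers'' step flagged in the introduction. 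Getting the bookkeeping of the constants $C(p,K)$, $C^*$ and the geometric growth rates to close into a single finite $\kappa$ is where the real work lies; everything else is the soft linear-algebra/measure-theory skeleton above.
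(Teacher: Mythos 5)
Your proposal follows essentially the same route as the paper: the paper reduces Theorem~\ref{MET1} to the uniform upper bound of Theorem~\ref{Boundedness}, proved exactly as you outline --- split $\partial_x\Phi_{0,t}(x)$ via \eqref{derivatives}, bound the $\lambda_1$-part through the unit-interval factorization and Cantelli's strong law of large numbers (Lemma~\ref{lem2} plus Borel--Cantelli serving only to pass from integer to continuous time), and bound the $\lambda_2$-part through the telescoping identity, Lemma~\ref{lem3.6} and the H\'ajek--R\'enyi inequality --- while the flag/filtration construction you spell out is the standard Lyapunov-exponent argument the paper leaves implicit. The only loose point is your Step 1 phrasing: summing the bounds $n+d^2e^K$ from Lemma~\ref{lem2} alone would give a divergent $\log n$ average, so the uniform moment estimates $E\xi_j^p\le C(p,K)$ for the unit-interval Jacobians together with Cantelli's SLLN are the essential ingredients there, as in the paper's proof.
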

\begin{rem}
\begin{enumerate}
  \item In Section \ref{section4}, we will construct an example to demonstrate that even when the coefficients are regular enough and the first-order derivatives are bounded, the upper limit in \eqref{LE} cannot be replaced by a limit, as the limit may not exist. Thus, this explains the reasonability for employing the upper limit to define Lyapunov exponents in this theorem.
  \item The absence of flow properties renders classical theoretical methods unusable. The invariance of the mapping $\omega\mapsto\lambda_i(\omega,x)$ with respect to Wiener shift $\theta_t$ cannot be obtained. Therefore, even though system $(\Omega,\mathcal{F},P,\theta_t)$ is ergodic, whether the Lyapunov exponent $\lambda_i(\omega,x)$ is independent of $\omega$ remains unknown.
  \item The absence of flow properties, unlike in the classical multiplicative ergodic theorem, results in the classical conclusions (such as random dynamical systems and invariant measures) of ergodic theory being inapplicable. Therefore, we approach the problem by considering trajectories individually in this theorem.
\end{enumerate}
\end{rem}

In fact, all we need to do is to establish the existence of the upper bound~$\kappa$ for all Lyapunov exponents and the existence of the full $P$-measurable set
$\Omega(x)$ for any $x\in\R^d$.
\begin{thm}\label{Boundedness}
Assume that coefficients $V_0,...,V_{d'}$ in equation~\eqref{mvsdex} belong to $\cal{C}_{b,Lip}^{1,1}(\R^d\times\cal{P}_2(\R^d);\R^d)$. Then there exists a constant $\kappa>0$ such that for any $x\in\R^d$, there exists a set $\Omega(x)\subset\Omega$ of full $P$-measure such that
\begin{align*}
\lambda(\omega,x,v):=\limsup_{t\rightarrow+\infty}\frac{1}{t}\log|\partial_x\Phi_{0,t}(\omega,x)v|\leq \kappa
\end{align*}
holds for any $\omega\in \Omega(x)$ and any tangent vector $v\in \R^d$.
\end{thm}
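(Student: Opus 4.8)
The plan is to reduce the claim to the two pieces $\lambda_1$ and $\lambda_2$ already isolated in the introduction, namely to show that for each $x$ there is a full-measure set on which
\[
\lambda_1(\omega,x,v):=\limsup_{t\to+\infty}\frac1t\log|\partial_x\Phi_{0,t}^{\delta_x}(\omega,x)v|\le\kappa,\qquad
\lambda_2(\omega,x,v):=\limsup_{t\to+\infty}\frac1t\log|\partial^x\Phi_{0,t}^{\delta_x}(\omega,x)v|\le\kappa,
\]
uniformly in $v\in\R^d\setminus\{0\}$; then \eqref{derivatives} and the elementary inequality $\log|a+b|\le\max\{\log|a|,\log|b|\}+\log 2$ give $\lambda(\omega,x,v)\le\max\{\lambda_1,\lambda_2\}\le\kappa$. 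Since the estimates below are in terms of the operator norms $\|\partial_x\Phi_{0,t}^{\delta_x}\|$ and $\|\partial^x\Phi_{0,t}^{\delta_x}\|$, the resulting bound is automatically uniform in the tangent vector $v$, so it suffices to control these norms along integer times and on the unit intervals in between.

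For $\lambda_1$: here the relevant object is the fundamental matrix $\Psi_{0,t}=\partial_x\Phi_{0,t}^{\delta_x}(x)$ of the linear SDE with coefficients $A_k(t)=\partial_xV_k(\Phi_{0,t}^{\delta_x}(x),P_{\Phi_{0,t}(x)})$, which by the cocycle identity $\Psi_{0,n}=\Psi_{n-1,n}\cdots\Psi_{0,1}$ satisfies $\log\|\Psi_{0,n}\|\le\sum_{j=1}^{n}\log\|\Psi_{j-1,j}\|$. Following \cite{Cong}, I set $\xi_j:=\log^+\sup_{j-1\le t\le j}\|\Psi_{j-1,j}\|$; by Lemma~\ref{lem2} the tails $P(\xi_j\ge\log(j-1+d^2e^K))$ are summable, and because $A_k$ is bounded by $K$, standard moment estimates (as in Lemma~\ref{lem3.6}, which gives $E\sup_{j-1\le t\le j}\|\Psi_{j-1,j}\|^p\le C^p$) show the $\xi_j$ have uniformly bounded fourth moments. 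Cantelli's strong law of large numbers then yields $\frac1n\sum_{j=1}^n(\xi_j-E\xi_j)\to0$ a.s., and since $\sup_n E\xi_n<\infty$ we conclude $\limsup_n\frac1n\sum_{j=1}^n\xi_j\le\kappa_1$ a.s. for a deterministic $\kappa_1$. A Hájek–Rényi / Borel–Cantelli argument over the unit intervals $[n,n+1]$ (again using Lemma~\ref{lem2}) removes the restriction to integer $t$, giving $\lambda_1(\omega,x,v)\le\kappa_1$ on a full-measure set $\Omega_1(x)$.

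For $\lambda_2$: this is the main obstacle, because the shift of the distribution destroys the multiplicative cocycle structure — $\partial^x\Phi_{0,t}^{\delta_x}(x)$ does \emph{not} factor as a product of i.i.d.\ (or even stationary) one-step matrices — so the strong law of large numbers cannot be applied directly. The remedy, outlined in the introduction, is to write $\partial^x\Phi_{0,t}^{\delta_x}(x)=\sum_{j=1}^{n}\Psi_{j,t}\cdot\big(\partial^x\Phi_{j-1,j}^{P_{j-1}^*\delta_x}(\Phi_{0,j-1}^{\delta_x}(x))\big)$-type telescoping built from the flow property \eqref{flow} (the increment on $[j-1,j]$ transported forward by the linear flow $\Psi$), so that $\|\partial^x\Phi_{0,n}^{\delta_x}(x)\|\le\sum_{j=1}^{n}\|\Psi_{j,n}\|\,\|D_j\|$ where $D_j:=\partial^x\Phi_{j-1,j}^{P_{j-1}^*\delta_x}(\Phi_{0,j-1}^{\delta_x}(x))$. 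Then $\frac1n\log\|\partial^x\Phi_{0,n}^{\delta_x}(x)\|\le\frac1n\log n+\max_{1\le j\le n}\frac1n\big(\log\|\Psi_{j,n}\|+\log\|D_j\|\big)$; the $\Psi$-factor is controlled exactly as in the $\lambda_1$ step (bounding $\log\|\Psi_{j,n}\|\le\sum_{i=j+1}^n\xi_i$), and for the $D_j$-factor Lemma~\ref{lem3.6} gives $E\|D_j\|^2\le (C^*)^j$, hence by Chebyshev $P(\log\|D_j\|\ge \tfrac12 j\log C^*+2\log j)$ is summable; Borel–Cantelli then bounds $\limsup_j\frac1j\log\|D_j\|$ by a deterministic constant. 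Mimicking the proof of the strong law — i.e.\ using that $\log(\cdot)$ turns the bad product-of-growths into a sum and that a finite-$j$ maximum divided by $n$ is negligible — produces a deterministic bound $\limsup_n\frac1n\log\|\partial^x\Phi_{0,n}^{\delta_x}(x)\|\le\kappa_2$ on a full-measure set $\Omega_2(x)$, and the unit-interval gaps are again filled in by a Hájek–Rényi estimate together with the moment bound of Lemma~\ref{lem3.6}. Finally, take $\kappa:=\max\{\kappa_1,\kappa_2\}+\log 2$ and $\Omega(x):=\Omega_1(x)\cap\Omega_2(x)$; this completes the proof, with the delicate point throughout being that every constant and every exceptional null set depends only on $x$ (through the coefficients and $K$) and not on the tangent vector $v$.
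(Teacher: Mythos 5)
Your proposal is correct and follows essentially the same route as the paper: the decomposition $\partial_x\Phi_{0,t}(x)=\partial_x\Phi_{0,t}^{\delta_x}(x)+\partial^{x}\Phi_{0,t}^{\delta_x}(x)$, the one-step factorization from the flow property, Lemma~\ref{lem2} with Borel--Cantelli for the continuous-time interpolation, uniform one-step moment bounds plus Cantelli's strong law for $\lambda_1$, and the telescoping sum with the exponentially growing moment bound of Lemma~\ref{lem3.6} for $\lambda_2$. The only (harmless) deviations are that your uniform moment bound for the one-step linear flow is proved inline in the paper rather than by Lemma~\ref{lem3.6}, and that in Step 2 you control the terms $\|D_j\|$ individually by Chebyshev plus Borel--Cantelli and a maximum over $j$, whereas the paper controls the centered partial sums $\sum_k(\xi'_k-E\xi'_k)$ via the H\'ajek--R\'enyi inequality; both yield the same deterministic bound.
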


\begin{proof}
For given $x,y\in\R^d$, set
$$B_n(x,y):=\Big\{\omega\in\Omega|\sup_{n\leq t\leq n+1}\|\partial_{x}\Phi_{n,t}^{P_n^*\delta_y}(\omega,\Phi_{0,n}^{\delta_y}(\omega,x))\|\geq n+d^2e^K\Big\},$$
$$B(x,y):=\bigcup_{k=1}^{+\infty}\bigcap_{j=k}^{+\infty}(\Omega\setminus B_j(x,y)).$$
Then Lemma \ref{lem2} implies
$$P\Big(B_n(x,y)\Big)\leq\frac{M}{n^2}.$$
Therefore,  $$\sum_{n=1}^{+\infty}P\Big(B_n(x,y)\Big)<+\infty.$$
By the Borel-Cantelli Lemma, it holds that $P(B(x,y))=1$.
Hence, for any $\omega\in B(x,y)$, there exists $k\geq1$ such that for any $j\geq k$, $\omega$ belongs to $\Omega\setminus B_j(x,y)$, i.e.
\begin{align}\label{12}
\sup_{j\leq t\leq j+1}\|\partial_{x}\Phi_{j,t}^{P_j^*\delta_y}(\omega,\Phi_{0,j}^{\delta_y}(\omega,x))\|\leq j+d^2e^K.
\end{align}

For any $x\in \R^d$ and any tangent vector $v\in \R^d$, it follows from \eqref{derivatives} that
\begin{align*}
\frac{1}{t}\log|\partial_x\Phi_{0,t}(x)v|&=\frac{1}{t}\log|\partial_{x}\Phi_{0,t}^{\delta_x}(x)v+\partial^{x}\Phi_{0,t}^{\delta_x}(x)v|\\
&\leq\frac{1}{t}\log\Big(|\partial_{x}\Phi_{0,t}^{\delta_x}(x)v|+|\partial^{x}\Phi_{0,t}^{\delta_x}(x)v|\Big)\nonumber\\
&\leq\frac{1}{t}\log\Big(2\max\big\{|\partial_{x}\Phi_{0,t}^{\delta_x}(x)v|,|\partial^{x}\Phi_{0,t}^{\delta_x}(x)v|\big\}\Big)\nonumber\\
&=\max\Big\{\frac{1}{t}\log|\partial_{x}\Phi_{0,t}^{\delta_x}(x)v|,\frac{1}{t}\log|\partial^{x}\Phi_{0,t}^{\delta_x}(x)v|\Big\}
+\frac{1}{t}\log2,\quad P-a.s.\nonumber
\end{align*}
Hence, we will divide the upcoming proof into two parts.

\textbf{Step 1: we want to verify that there exists a constant $C>0$ such that the following fact holds:  for any $x,y\in \R^d$, there exists a full $P$-measurable set $\widehat{B}(x,y)$ such that
$$\lambda_1(x,y,\omega):=\limsup_{t\rightarrow+\infty}\frac{1}{t}\log|\partial_{x}\Phi_{0,t}^{\delta_y}(\omega,x)v|\leq C$$
for any $v\in \R^d$, $\omega\in \widehat{B}(x,y)$.}

For given $x,y,v\in \R^d$ and $\omega\in B(x,y)$. The properties \eqref{flow} and \eqref{12} imply that
\begin{align*}
&\limsup_{n\rightarrow+\infty}\frac{1}{n}\log|\partial_x\Phi_{0,n}^{\delta_y}(\omega,x)v|\\
&\leq\limsup_{t\rightarrow+\infty}\frac{1}{t}\log|\partial_x\Phi_{0,t}^{\delta_y}(\omega,x)v|\\
&=\lim_{t_n\rightarrow+\infty}\frac{1}{t_n}\log|\partial_x\Phi_{0,t_n}^{\delta_y}(\omega,x)v|\\
&=\lim_{t_n\rightarrow+\infty}\frac{1}{t_n}\log\|\partial_x\Phi_{m_n,t_n}^{P_{m_n}^*\delta_y}(\omega,\Phi_{0,m_n}^{\delta_y}(\omega,x))
\partial_x\Phi_{0,m_n}^{\delta_y}(\omega,x)v|\\
&\leq\lim_{m_n\rightarrow+\infty}\frac{1}{m_n}\log\|\partial_x\Phi_{m_n,t_n}^{P_{m_n}^*\delta_y}(\omega,\Phi_{0,m_n}^{\delta_y}(\omega,x))\|
|\partial_x\Phi_{0,m_n}^{\delta_y}(\omega,x)v|\\
&\leq\lim_{m_n\rightarrow+\infty}\frac{1}{m_n}\log\|\partial_x\Phi_{m_n,t_n}^{P_{m_n}^*\delta_y}(\omega,\Phi_{0,m_n}^{\delta_y}(\omega,x))\|
+\lim_{m_n\rightarrow+\infty}\frac{1}{m_n}\log|\partial_x\Phi_{0,m_n}^{\delta_y}(\omega,x)v|\\
&\leq\lim_{m_n\rightarrow+\infty}\frac{1}{m_n}\log\sup_{t\in[m_n,m_n+1]}\|\partial_x\Phi_{m_n,t}^{P_{m_n}^*\delta_y}(\omega,\Phi_{0,m_n}^{\delta_y}(\omega,x))\|
+\lim_{m_n\rightarrow+\infty}\frac{1}{m_n}\log|\partial_x\Phi_{0,m_n}^{\delta_y}(\omega,x)v|\\
&\leq\lim_{m_n\rightarrow+\infty}\frac{1}{m_n}\log(m_n+d^2e^K)+\lim_{m_n\rightarrow+\infty}\frac{1}{m_n}\log|\partial_x\Phi_{0,m_n}^{\delta_y}(\omega,x)v|\\
&\leq\limsup_{n\rightarrow+\infty}\frac{1}{n}\log|\partial_x\Phi_{0,n}^{\delta_y}(\omega,x)v|,
\end{align*}
where $t_n$ is the time series that ensures the existence of the limit and $m_n$ is the greatest integer less than or equal to $t_n$.
Therefore,  it holds that
$$\limsup_{n\rightarrow+\infty}\frac{1}{n}\log|\partial_{x}\Phi_{0,n}^{\delta_y}(\omega,x)v|
=\limsup_{t\rightarrow+\infty}\frac{1}{t}\log|\partial_{x}\Phi_{0,t}^{\delta_y}(\omega,x)v|.$$

Next, we want to show that
$$\limsup_{n\rightarrow+\infty}\frac{1}{n}\log|\partial_{x}\Phi_{0,n}^{\delta_y}(\omega,x)v|$$
is bounded with respect to $\omega,x,v$. Set
$$\xi_j(\omega):=\|\partial_{x}\Phi_{j,j+1}^{P^*_j\delta_y}(\omega,\Phi_{0,j}^{\delta_y}(\omega,x))\|.$$
It is sufficient to show that $\xi_j$ has finite moments of order $4$. In fact, according to \eqref{DE1}, $\partial_{x}\Phi_{j,j+1}^{P^*_j\delta_y}(z)$ satisfies
$$\partial_{x}\Phi_{j,j+1}^{P^*_j\delta_y}(z)=I+\sum_{k=0}^{d'}\int_j^{j+1}\partial_xV_k(\Phi_{j,r}^{P^*_j\delta_y}(z),P_{X_{r}^y})\partial_{x}\Phi_{j,r}^{P^*_j\delta_y}(z)\,\rmd W^k_r.$$
Through simple calculations, we obtain that for any $p\geq 2$, there exists a constant $C(p,K)$ such that for any $z\in\R^d$ and $j\in\N$,
\begin{align*}
&E[\|\partial_{x}\Phi_{j,j+1}^{P^*_j\delta_y}(z)\|^p]\leq C(p,K).
\end{align*}
Furthermore, for any $j\in\N$, it holds that
\begin{align*}
E\xi_j^p&=E\|\partial_{x}\Phi_{j,j+1}^{P^*_j\delta_y}(\Phi_{0,j}^{\delta_y}(x))\|^p\\
&\leq E\Big[E\bigr[\|\partial_{x}\Phi_{j,j+1}^{P^*_j\delta_y}(z)\|^p\bigr]\bigr|_{z=\Phi_{0,j}^{\delta_y}(x)}\Big]\\
&\leq C(p,K).
\end{align*}
This implies that $E\big(\log^+\xi_j\big)^p\leq C(p,K)$ for any $j\in\N$. Meanwhile, $E\big|\log^+\xi_j-E\log^+\xi_j\big|^4$ can be controlled by a constant independent of $j$.
Then it follows from Cantelli' strong law of large numbers that for any $\omega\in B(x,y)\setminus N(x,y)=:\widehat{B}(x,y)$,
$$\lim_{n\rightarrow+\infty}\frac{1}{n}\sum_{j=0}^{n-1}\Big(\log^+\xi_j(\omega)-E\log^+\xi_j\Big)=0,$$
where $N(x,y)$ is the null set generated by the strong law of large numbers.
Therefore, by property~\eqref{flow}, we obtain that  for any $x,y\in\R^d$, $v\in\R^d$ and $\omega\in \widehat{B}(x,y)$,
\begin{align*}
&\limsup_{n\rightarrow+\infty}\frac{1}{n}\log|\partial_{x}\Phi_{0,n}^{\delta_y}(\omega,x)v|\\
&\leq \limsup_{n\rightarrow+\infty}\frac{1}{n}\log\|\partial_{x}\Phi_{0,n}^{\delta_y}(\omega,x)\|\\
&\leq \limsup_{n\rightarrow+\infty}\frac{1}{n}\sum_{j=0}^{n-1}\log\|\partial_{x}\Phi_{j,j+1}^{P^*_j\delta_y}(\omega,\Phi_{0,j}^{\delta_y}(\omega,x))\|\\
&\leq \limsup_{n\rightarrow+\infty}\frac{1}{n}\sum_{j=0}^{n-1}\log^+\xi_j(\omega)\\
&\leq\limsup_{n\rightarrow+\infty}\frac{1}{n}\sum_{j=0}^{n-1}E\log^+\xi_j
+\limsup_{n\rightarrow+\infty}\frac{1}{n}\sum_{j=0}^{n-1}\Big(\log^+\xi_j(\omega)-E\log^+\xi_j\Big)\\
&\leq C(p,K)=:C.
\end{align*}

\textbf{Step 2: we want to demonstrate that there exists a constant $C'>0$ such that the following fact holds: for any $x,y\in\R^d$, there exists a full $P$-measurable set $\widetilde{B}(y,x)$ such that
 $$\lambda_2(y,x,\omega):=\limsup_{t\rightarrow+\infty}\frac{1}{t}\log|\partial^{x}\Phi_{0,t}^{\delta_x}(\omega,y)v|\leq C'$$
for any $v\in\R^d$ and any $\omega\in \widetilde{B}(y,x)$.}

We first undertake the proof for discrete time. It follows from Lemma \ref{lemma3.4} and the flow property $\Phi_{0,t}^{\delta_x}(y)=\Phi_{r,t}^{P^*_r\delta_x}\big(\Phi_{0,r}^{\delta_x}(y)\big)$ with $0\leq r\leq t$ that
\begin{align*}
&\partial^{x}\Phi_{0,n}^{\delta_x}(\omega,y)\\
&=\partial^{x}\Phi_{n-1,n}^{P_{n-1}^*\delta_x}\big(\omega,\Phi_{0,n-1}^{\delta_x}(\omega,y)\big)\\
&+\partial_{x}\Phi_{n-1,n}^{P_{n-1}^*\delta_x}\big(\omega,\Phi_{0,n-1}^{\delta_x}(\omega,y)\big)\,
\partial^{x}\Phi_{n-2,n-1}^{P_{n-2}^*\delta_x}\big(\omega,\Phi_{0,n-2}^{\delta_x}(\omega,y)\big)\\
&+...\\
&+\partial_{x}\Phi_{n-1,n}^{P_{n-1}^*\delta_x}\big(\omega,\Phi_{0,n-1}^{\delta_x}(\omega,y)\big)\,\partial_{x}\Phi_{n-2,n-1}^{P_{n-2}^*\delta_x}\,
\big(\omega,\Phi_{0,n-2}^{\delta_x}(\omega,y)\big)
\,...\,\partial_{x}\Phi_{1,2}^{P_{1}^*\delta_x}\big(\omega,\Phi_{0,1}^{\delta_x}(\omega,y)\big)\,\partial^{x}\Phi_{0,1}^{\delta_x}(\omega,y).
\end{align*}
This implies that
\begin{align*}
&\limsup_{n\rightarrow+\infty}\frac{1}{n}\log|\partial^{x}\Phi_{0,n}^{\delta_x}(\omega,y)v|\\
&\leq\limsup_{n\rightarrow+\infty}\frac{1}{n}\max\Big\{\log\|\partial^{x}\Phi_{n-1,n}^{P_{n-1}^*\delta_x}\big(\omega,\Phi_{0,n-1}^{\delta_x}(\omega,y)\big)\|,\\
&\qquad\qquad\qquad\qquad\log\|\partial_{x}\Phi_{n-1,n}^{P_{n-1}^*\delta_x}\big(\omega,\Phi_{0,n-1}^{\delta_x}(\omega,y)\big)\,
\partial^{x}\Phi_{n-2,n-1}^{P_{n-2}^*\delta_x}\big(\omega,\Phi_{0,n-2}^{\delta_x}(\omega,y)\big)\|,\\
&\qquad\qquad\qquad\qquad...,\\
&\qquad\qquad\qquad\qquad\log\|\partial_{x}\Phi_{n-1,n}^{P_{n-1}^*\delta_x}\big(\omega,\Phi_{0,n-1}^{\delta_x}(\omega,y)\big)\,\partial_{x}\Phi_{n-2,n-1}^{P_{n-2}^*\delta_x}\,
\big(\omega,\Phi_{0,n-2}^{\delta_x}(\omega,y)\big)\\
&\qquad\qquad\qquad\qquad\qquad\qquad\qquad\qquad\qquad...\,\partial_{x}\Phi_{1,2}^{P_{1}^*\delta_x}\big(\omega,\Phi_{0,1}^{\delta_x}(\omega,y)\big)\,\partial^{x}\Phi_{0,1}^{\delta_x}(\omega,y)\|\Big\}\\
&\leq\limsup_{n\rightarrow+\infty}\frac{1}{n}\max\Big\{\log^+\|\partial^{x}\Phi_{n-1,n}^{P_{n-1}^*\delta_x}\big(\omega,\Phi_{0,n-1}^{\delta_x}(\omega,y)\big)\|,\\
&\qquad\qquad\qquad\qquad\log^+\|\partial_{x}\Phi_{n-1,n}^{P_{n-1}^*\delta_x}(\omega,\Phi_{0,n-1}^{\delta_x}(\omega,y))\|+
\log\|\partial^{x}\Phi_{n-2,n-1}^{P_{n-2}^*\delta_x}(\omega,\Phi_{0,n-2}^{\delta_x}(\omega,y))\|,\\
&\qquad\qquad\qquad\qquad...,\\
&\qquad\qquad\qquad\qquad\sum_{k=2}^{n-1}\log^+\|\partial_{x}\Phi_{k,k+1}^{P_{k}^*\delta_x}(\omega,\Phi_{0,k}^{\delta_x}(\omega,y))\|
+\log\|\partial^{x}\Phi_{1,2}^{P_{1}^*\delta_x}(\omega,\Phi_{0,1}^{\delta_x}(\omega,y))\|,\\
&\qquad\qquad\qquad\qquad\sum_{k=1}^{n-1}\log^+\|\partial_{x}\Phi_{k,k+1}^{P_{k}^*\delta_x}(\omega,\Phi_{0,k}^{\delta_x}(\omega,y))\|
+\log\|\partial^{x}\Phi_{0,1}^{\delta_x}(\omega,y)\|\Big\}\\
&\leq\limsup_{n\rightarrow+\infty}\frac{1}{n}\sum_{k=0}^{n-1}\log^+\|\partial_{x}\Phi_{k,k+1}^{P_{k}^*\delta_x}(\omega,\Phi_{0,k}^{\delta_x}(\omega,y))\|\\
&\qquad\qquad\qquad\qquad\qquad\qquad\qquad+\limsup_{n\rightarrow+\infty}\frac{1}{n}\log\Bigr(\sum_{k=0}^{n-1}\|\partial^{x}\Phi_{k,k+1}^{P_{k}^*\delta_x}(\omega,\Phi_{0,k}^{\delta_x}(\omega,y))\|\Bigr).
\end{align*}
For the first term in the above expression, through the proof in the first part, we obtain that there exists a constant $C_1>0$ such that
\begin{align*}
&\limsup_{n\rightarrow+\infty}\frac{1}{n}\sum_{k=0}^{n-1}\log^+\|\partial_{x}\Phi_{k,k+1}^{P_{k}^*\delta_x}(\omega,\Phi_{0,k}^{\delta_x}(\omega,y))\|\leq C_1.
\end{align*}
For the second term,
\begin{align*}
&\limsup_{n\rightarrow+\infty}\frac{1}{n}\log\Bigr(\sum_{k=0}^{n-1}\|\partial^{x}\Phi_{k,k+1}^{P_{k}^*\delta_x}(\omega,\Phi_{0,k}^{\delta_x}(\omega,y))\|\Bigr)\\
&\leq\limsup_{n\rightarrow+\infty}\frac{1}{n}\log^+\Bigr(\sum_{k=0}^{n-1}E\|\partial^{x}\Phi_{k,k+1}^{P_{k}^*\delta_x}(\Phi_{0,k}^{\delta_x}(y))\|\Bigr)\\
&\qquad\qquad+\limsup_{n\rightarrow+\infty}\frac{1}{n}\log\Bigr|\sum_{k=0}^{n-1}\|\partial^{x}\Phi_{k,k+1}^{P_{k}^*\delta_x}(\omega,\Phi_{0,k}^{\delta_x}(\omega,y))\|
-\sum_{k=0}^{n-1}E\|\partial^{x}\Phi_{k,k+1}^{P_{k}^*\delta_x}(\Phi_{0,k}^{\delta_x}(y))\|\Bigr|.
\end{align*}
Due to Lemma \ref{lem3.6}, the first term on the right-hand side of the inequality can be controlled by a constant, defined by $C_2$. We now deal with the second term. Set $$\xi'_k(\omega):=\|\partial^{x}\Phi_{k,k+1}^{P_{k}^*\delta_x}(\omega,\Phi_{0,k}^{\delta_x}(\omega,y))\|.$$
We aim to show that
\begin{align*}
\limsup_{n\rightarrow+\infty}\frac{1}{n}\log\Bigr|\sum_{k=0}^{n-1}\xi'_k-\sum_{k=0}^{n-1}E\xi'_k\Bigr|\leq C^*,\quad a.s.,
\end{align*}
where $C^*$ is defined in Lemma \ref{lem3.6}.

Note that
\begin{align}\label{SLL}
\lim_{n\rightarrow+\infty}\frac{1}{e^{C^*n}}\Big|\sum_{k=0}^{n-1}\bigr(\xi'_k-E\xi'_k\bigr)\Big|=0, \quad a.s.
\end{align}
is equivalent to that
\begin{align}\label{limt}
\lim_{m\rightarrow+\infty} P\Big\{\bigcup_{n=m}^{+\infty}\Big(\frac{1}{e^{C^*n}}\Big|\sum_{k=0}^{n-1}\bigr(\xi'_k-E\xi'_k\bigr)\Big|>\varepsilon\Big)\Big\}=0
\end{align}
for any $\varepsilon>0$. Due to
\begin{align*}
\bigcup_{n=m}^{+\infty}\Big\{\frac{1}{e^{C^*n}}\Big|\sum_{k=0}^{n-1}\bigr(\xi'_k-E\xi'_k\bigr)\Big|>\varepsilon\Big\}
&\subset \Big\{\sup_{n\geq m}\frac{1}{e^{C^*n}}\Big|\sum_{k=0}^{n-1}\bigr(\xi'_k-E\xi'_k\bigr)\Big|>\varepsilon\Big\},
\end{align*}
it suffices to show that
\begin{align*}
\lim_{m\rightarrow+\infty} P\Big\{\sup_{n\geq m}\frac{1}{e^{C^*n}}\Big|\sum_{k=0}^{n-1}\bigr(\xi'_k-E\xi'_k\bigr)\Big)\Big|>\varepsilon\Big\}=0.
\end{align*}
According to the H\'ajek-R\'enyi inequality,
\begin{align*}
&P\Big\{\max_{m\leq n\leq l}\Big|\frac{1}{e^{C^*n}}\Big(\sum_{k=1}^{n}\bigr(\xi'_{k-1}-E\xi'_{k-1}\bigr)\Big)\Big|>\varepsilon\Big\}\\
&\leq\frac{1}{\varepsilon^2}\Big(\frac{1}{e^{2C^*m}}\sum_{k=1}^{m}E\Big|(\xi'_{k-1}-E\xi'_{k-1})\Big|^2
+\sum_{k=m+1}^{l}\frac{1}{e^{2C^*k}}E\Big|(\xi'_{k-1}-E\xi'_{k-1})\Big|^2\Big)\\
&\leq\frac{2}{\varepsilon^2}\Big(\frac{1}{e^{2C^*m}}\sum_{k=1}^{m}E|\xi'_{k-1}|^2
+\sum_{k=m+1}^{l}\frac{1}{e^{2C^*k}}E|\xi'_{k-1}|^2\Big)\\
&\leq\frac{C}{\varepsilon^2}\Big(\frac{1}{e^{2C^*m}}\sum_{k=1}^{m}e^{C^*k}
+\sum_{k=m+1}^{l}\frac{1}{e^{2C^*k}}e^{C^*k}\Big)\\
&\leq\frac{C}{\varepsilon^2}\Big(\frac{1}{e^{2C^*m}}\sum_{k=1}^{m}e^{C^*k}
+\sum_{k=m+1}^{l}\frac{1}{e^{C^*k}}\Big).
\end{align*}
Due to the continuity of $P$,
\begin{align*}
&P\Big\{\sup_{n\geq m}\Big|\frac{1}{e^n}\Big(\sum_{k=1}^{n}\bigr(\xi'_{k-1}-E\xi'_{k-1}\bigr)\Big)\Big|>\varepsilon\Big\}\\
&\leq\frac{C}{\varepsilon^2}\Big(\frac{1}{e^{2C^*m}}\sum_{k=1}^{m}e^{C^*k}
+\sum_{k=m+1}^{+\infty}\frac{1}{e^{C^*k}}\Big).
\end{align*}
As $m\rightarrow+\infty$, the limit of the right side of the above is zero. This implies that \eqref{SLL} holds. Therefore,
$$\lim_{n\rightarrow+\infty}\Bigr[\frac{1}{n}\log\Bigr|\sum_{k=0}^{n-1}\xi'_k-\sum_{k=0}^{n-1}E\xi'_k\Bigr|-C^*\Bigr]n=-\infty, \quad a.s.$$
This implies
\begin{align*}
\limsup_{n\rightarrow+\infty}\frac{1}{n}\log\Bigr|\sum_{k=0}^{n-1}\xi'_k-\sum_{k=0}^{n-1}E\xi'_k\Bigr|<C^*, \quad a.s.
\end{align*}
We denote the null set as $N'(y,x)$. Then there exists a constant $C'>0$ such that for any $x,y\in\R^d$ and any $\omega\in\Omega\setminus N'(y,x)$,
\begin{align*}
\limsup_{n\rightarrow+\infty}\frac{1}{n}\log\|\partial^{x}\Phi_{0,n}^{\delta_x}(\omega,y)\|\leq C'.
\end{align*}

Finally, we show that the above still holds for continuous time. The properties \eqref{flow} and \eqref{12} imply that for any $x,y\in \R^d$ and $\omega\in B(y,x)$,
\begin{align*}
&\limsup_{t\rightarrow+\infty}\frac{1}{t}\log|\partial^{x}\Phi_{0,t}^{\delta_x}(\omega,y)v|\\
&=\lim_{t\rightarrow+\infty}\frac{1}{t_n}\log|\partial^{x}\Phi_{0,t_n}^{\delta_x}(\omega,y)v|\\
&\leq\limsup_{t\rightarrow+\infty}\frac{1}{m_n}\log\|\partial^{x}\Phi_{m_n,t_n}^{P_{m_n}^*\delta_x}(\omega,\Phi_{0,m_n}^{\delta_x}(\omega,y))
+\partial_{x}\Phi_{m_n,t_n}^{P_{m_n}^*\delta_x}(\omega,\Phi_{0,m_n}^{\delta_x}(\omega,y))\partial^{x}\Phi_{0,m_n}^{\delta_x}(\omega,y)\|\\
&\leq\limsup_{m_n\rightarrow+\infty}\max\Big\{\frac{1}{m_n}\log\|\partial^{x}\Phi_{m_n,t_n}^{P_{m_n}^*\delta_x}(\omega,\Phi_{0,m_n}^{\delta_x}(\omega,y))\|,\\
&\qquad\qquad\qquad\qquad\qquad\qquad\qquad\qquad\qquad\frac{1}{m_n}\log\|\partial_{x}\Phi_{m_n,t_n}^{P_{m_n}^*\delta_x}(\omega,\Phi_{0,m_n}^{\delta_x}(\omega,y))\partial^{x}\Phi_{0,m_n}^{\delta_x}(\omega,y)\|\Big\}\\
&\leq\max\Big\{\limsup_{m_n\rightarrow+\infty}\frac{1}{m_n}\log\|\partial^{x}\Phi_{m_n,t_n}^{P_{m_n}^*\delta_x}(\omega,\Phi_{0,m_n}^{\delta_x}(\omega,y))\|,\\
&\qquad\qquad\qquad\qquad\qquad\qquad\qquad\qquad\qquad\limsup_{m_n\rightarrow+\infty}\frac{1}{m_n}\log(m_n+dK^2)\|\partial^{x}\Phi_{0,m_n}^{\delta_x}(\omega,y)\|\Big\}\\
&\leq\max\Big\{\limsup_{m_n\rightarrow+\infty}\frac{1}{m_n}\log\|\partial^{x}\Phi_{m_n,t_n}^{P_{m_n}^*\delta_x}(\omega,\Phi_{0,m_n}^{\delta_x}(\omega,y))\|
,\limsup_{m_n\rightarrow+\infty}\frac{1}{m_n}\log\|\partial^{x}\Phi_{0,m_n}^{\delta_x}(\omega,y)\|\Big\}\\
&\leq\max\Big\{\limsup_{m_n\rightarrow+\infty}\frac{1}{m_n}\log\|\partial^{x}\Phi_{m_n,t_n}^{P_{m_n}^*\delta_x}(\omega,\Phi_{0,m_n}^{\delta_x}(\omega,y))\|
,C'\Big\},
\end{align*}
where $t_n$ is the time series that ensures the existence of the limit and  $m_n$ is the greatest integer less than or equal to $t_n$.
Set $$\overline{\xi}_k(\omega):=\sup_{t\in[k,k+1)}\|\partial^{x}\Phi_{k,t}^{P_{k}^*\delta_x}(\omega,\Phi_{0,k}^{\delta_x}(\omega,y))\|.$$
We obtain that
\begin{align*}
&\limsup_{m_n\rightarrow+\infty}\frac{1}{m_n}\log\|\partial^{x}\Phi_{m_n,t_n}^{P_{m_n}^*\delta_x}(\omega,\Phi_{0,m_n}^{\delta_x}(\omega,y))\|\\
&\leq \limsup_{m_n\rightarrow+\infty}\frac{1}{m_n}\log\sup_{t\in[m_n,m_n+1)}\|\partial^{x}\Phi_{m_n,t}^{P_{m_n}^*\delta_x}(\omega,\Phi_{0,m_n}^{\delta_x}(\omega,y))\|\\
&\leq \limsup_{m_n\rightarrow+\infty}\frac{1}{m_n}\log\sum_{k=0}^{m_n}\sup_{t\in[k,k+1)}\|\partial^{x}\Phi_{m_n,t}^{P_{m_n}^*\delta_x}(\omega,\Phi_{0,m_n}^{\delta_x}(\omega,y))\|\\
&\leq \limsup_{m_n\rightarrow+\infty}\frac{1}{m_n}\log\sum_{k=0}^{m_n}\overline{\xi}_k(\omega)\\
&\leq \limsup_{n\rightarrow+\infty}\frac{1}{n}\log\sum_{k=0}^{n-1}E\overline{\xi}_k+ \limsup_{n\rightarrow+\infty}\frac{1}{n}\log\sum_{k=0}^{n-1}\Big(\overline{\xi}_k(\omega)-E\overline{\xi}_k\Big).
\end{align*}
The remaining proof is similar to that of the discrete case. Ultimately, for any $x,y\in \R^d$, we establish the upper bound for $\lambda_2(y,x,\cdot)$ on $B(y,x)\setminus N'(y,x)=:\widetilde{B}(y,x)$. We still denote this upper bound as $C'$, which is independent of $x,y$.

Let $y=x$ and $\widehat{B}(x,x)\bigcap \widetilde{B}(x,x)=:\Omega(x)$, which is a full $P$-measurable set. Finally, we obtain that for any $x,v\in\R^d$, $\omega\in\Omega(x)$, the following formula holds:
$$\limsup_{t\rightarrow+\infty}\frac{1}{t}\log|\partial_x\Phi_{0,t}(\omega,x)v|\leq \max\{C,C'\}=:\kappa.$$
The proof of Theorem \ref{Boundedness} is complete.
\end{proof}

\begin{rem}
The proof in the second part differs from the first part due to distributional shifts, making it impossible to control $E\bigr\|\partial^{x}\Phi_{n-1,n}^{P_{n-1}^*\delta_x}(\Phi_{n-1}^{\delta_x}(y))\bigr\|^p$ by a constant independent of $n$, as indicated in Lemma~\ref{lem3.6}. This prevents us from directly applying the strong law of large numbers. We need to perform a transformation \eqref{SLL} and then apply the methods used to prove the strong law of large numbers to establish the upper bound.
\end{rem}

\section{An Example}\label{section4}

In this section, we present a specific McKean-Vlasov SDE with coefficients that are sufficiently smooth, and the first derivatives are bounded. However, we cannot define the Lyapunov exponent using the limit in this example as the limit does not exist. Simultaneously, this example illustrates the reasonability for employing the upper limit to define Lyapunov exponents in Theorem~\ref{MET1}. It is worth noting that under the same conditions, for classical SDEs, the Lyapunov exponent can be defined using a limit, as the limit does exist. This indicates significant differences in dynamics between classical SDEs and McKean-Vlasov SDEs. Furthermore, this example further reveals the significant impact of distributions on the system.

Consider the following continuous function
\begin{equation}
g(x)=
\left\{
\begin{aligned}
\nonumber
&x-\frac{1}{2},~&&x\in(-\infty,\frac{1}{2}),\\
&0,~&&x\in[\frac{1}{2},a_0),\\
&...\\
&\frac{1}{4}x-\frac{1}{4}a_{2n-2}+\frac{1}{4}a_{2n-3}+...+\frac{1}{4}a_1-\frac{1}{4}a_0,~&&x\in[a_{2n-2},a_{2n-1}),\\
&\frac{1}{4}a_{2n-1}-\frac{1}{4}a_{2n-2}+\frac{1}{4}a_{2n-3}+...+\frac{1}{4}a_1-\frac{1}{4}a_0,~&&x\in[a_{2n-1},a_{2n}),\\
&...,
\end{aligned}
\right.
\end{equation}
where the sequence $\{a_n\}\subseteq\R^+~(a_0>1)$ is strictly monotonically increasing sequence. Next, we determine the values of $\{a_n\}$. Consider the ODE
\begin{align*}
Y_t=1+\int_0^tY_{r}-g(Y_{r})\,\rmd r.
\end{align*}
Regardless of the values of the sequence $\{a_n\}$, the solution $Y_{t}~(t\geq0)$ exists uniquely and is strictly monotonically increasing with respect to $t$. We define $Y_{S_n}$ as $a_n$ successively, where $S_n =2^n$.

We polish $g$ such that the function $\overline{g}$ after polishing is infinitely differentiable, and satisfies
\begin{equation*}
\left\{
\begin{aligned}
&0\leq \overline{g}'(x)\leq\frac{1}{4}, &&x\in[1,+\infty), \\
&\overline{g}(x)=g(x), &&x\in(a_n+\varepsilon_n,a_{n+1}-\varepsilon_{n+1}),
\end{aligned}
\right.
\end{equation*}
where $\varepsilon_n=e^{-S_n^2}$. Consider the ODE
\begin{align}\label{ODE}
Y_t=1+\int_0^tY_{r}-\overline{g}(Y_{r})\,\rmd r.
\end{align}
The solution to this equation always exists uniquely, denoted by $\overline{Y}_{t}~(t\geq0)$. It should be noted that $\overline{Y}_{\cdot}$ is strictly monotonically increasing. We denote $\overline{g}'(\overline{Y}_t)$ by $f(t)$ and choose three suitable positive sequences $T_n,\delta_n,\delta'_n$ such that for any $n\geq0$, $\overline{Y}_{T_n-\delta_{n}}=a_n-\varepsilon_n$, $\overline{Y}_{T_n+\delta'_{n}}=a_n+\varepsilon_n$, $\overline{Y}_{T_n}=a_n$. It should be noted that $\delta_n,\delta'_n=o(e^{-S_n^2})$ because of $\varepsilon_n=e^{-S_n^2}$.

Moreover, the expression of $g$ implies that for any $n\geq 1$,
\begin{equation}\label{f}
f(t)=
\left\{
\begin{aligned}
&\leq \frac{1}{4}, && t\in[T_{2n-2}-\delta_{2n-2},T_{2n-2}+\delta'_{2n-2}], \\
&\frac{1}{4}, && t\in(T_{2n-2}+\delta'_{2n-2},T_{2n-1}-\delta_{2n-1}), \\
&\leq \frac{1}{4}, && t\in[T_{2n-1}-\delta_{2n-1},T_{2n-1}+\delta'_{2n-1}], \\
&0, && t\in(T_{2n-1}+\delta'_{2n-1},T_{2n}-\delta_{2n}).
\end{aligned}
\right.
\end{equation}
This achieves alternating occurrences of rapid and slow growth for the particle distribution on a time partition with exponentially increasing interval lengths.
Set $c_n:=T_n-S_n$. Then it holds that
\begin{align}\label{ST1}
c:=\sum_{l=0}^{+\infty}(c_{2l+2}-c_{2l})<+\infty,
\end{align}
and
\begin{align}\label{ST}
\lim_{n\rightarrow+\infty}\frac{c_{n+l}}{T_n}=0,\quad \text{for all}~ l>0.
\end{align}

We are now ready to state and prove the main result of this section. Consider the following $1$-dimensional McKean-Vlasov SDE
\begin{equation}\label{example}
\left\{
\begin{aligned}
\rmd X_t&=\bigr(X_t-\overline{g}(EX_t)\bigr)\,\rmd t+X_t\,\rmd W_t,\\
X_0&=x,\\
\end{aligned}
\right.
\end{equation}
where $W_t$ is a $1$-dimensional Brownian motion. The coefficients of the equation belong to $\cal{C}_{b,Lip}^{n.n}(\R\times\cal{P}_2(\R);\R)(n\geq1)$. Hence, \cite[Theorem 3.3]{CL} implies that the unique solution of the equation exists globally and there exists a modification of this solution, denoted by $\Phi_{0,t}(x)$,  such that for all $t\geq0$, the mapping $x\mapsto\Phi_{0,t}(x)$ is $P$-$a.s.$ differentiable.

Set $F(P_v):=\overline{g}(Ev)$, $P_v\in\cal{P}_2(\R)$. Then $F$ is Lions differentiable in $\cal{P}_2(\R)$ and the Lions derivative of $F$ at $P_{v}\in\cal{P}_2(\R)$ satisfies $\partial_\mu F(P_{v},y)=\overline{g}'(Ev)$ for any $y\in\R$; see \cite[Example~2.2]{BLPR} for details. Therefore, the Jacobian matrix of $\Phi_{0,t}(x)$ satisfies
\begin{align*}
\partial_x\Phi_{0,t}(x)=1+\int_0^t\partial_x\Phi_{0,r}(x)-\overline{g}'(E[\Phi_{0,r}(x)])E\big[\partial_x\Phi_{0,r}(x)\big]\,\rmd r+\int_0^t\partial_x\Phi_{0,r}(x)\,\rmd W_r.
\end{align*}
Through straightforward calculations, we obtain
$$E[\partial_x\Phi_{0,t}(x)]=e^{\int_0^t(1-\overline{g}'(E[\Phi_{0,r}(x)]))\,\rmd r}.$$

Let $x=1$. It should be noted that $E[\Phi_{0,t}(1)]$ is the unique solution of ODE \eqref{ODE}. Therefore, $\overline{g}'(E[\Phi_{0,t}(1)])=f(t)$.
By \cite[Theorem 8.4.2]{ARn},
\begin{align*}
\partial_x\Phi_{0,t}(1)&=e^{\frac{1}{2}t+W_t}\Big(1-\int_0^te^{-\frac{1}{2}r-W_r}\,f(r)\,e^{\int_0^r(1-f(u))\rmd u}\rmd r\Big)\\
&=e^{\frac{1}{2}t+W_t}\Big(1-\int_0^tf(r)\,e^{\int_0^r(\frac{1}{2}-f(u))\rmd u-W_r}\rmd r\Big).
\end{align*}
Further, it holds that for sufficiently large $t$ and almost all $\omega$,
\begin{align}\label{Ly}
&\frac{1}{t}\log|\partial_x\Phi_{0,t}(\omega,1)|\\
&=\frac{1}{t}\log e^{\frac{1}{2}t+W_t(\omega)}+\frac{1}{t}\log\Big|1-\int_0^te^{-\frac{1}{2}r-W_r(\omega)}\,f(r)\,e^{\int_0^r(1-f(u))\rmd u}\rmd r\Big|\nonumber\\
&=\frac{1}{2}+\frac{W_t(\omega)}{t}+\frac{1}{t}\log\Big(\int_0^tf(r)\,e^{\int_0^r(\frac{1}{2}-f(u))\rmd u-W_r(\omega)}\rmd r-1\Big)\nonumber\\
&=\frac{1}{2}+\frac{W_t(\omega)}{t}+\frac{1}{t}\log \Big(e^{\int_0^{1-\delta_0}(\frac{1}{2}-f(u))\rmd u}\int_{1-\delta_0}^tf(r)\,e^{\int_{1-\delta_0}^r(\frac{1}{2}-f(u))\rmd u-W_r(\omega)}\rmd r+C(\omega)\Big),\nonumber
\end{align}
where $$C(\omega)=\int_0^{1-\delta_0}f(r)\,e^{\int_0^r(\frac{1}{2}-f(u))\rmd u-W_r(\omega)}\rmd r-1.$$

It is well-known that the Brownian motion satisfies the property:
\begin{align}\label{W}
\lim\limits_{t\rightarrow+\infty}\dfrac{W(t)}{t}=0, \quad a.s.
\end{align}
This implies that for almost all $\omega\in \Omega$, any $\epsilon>0$, there exists $K(\omega)>0$ such that for $r\geq T_{K(\omega)}$,
\begin{align}\label{W0}
-\epsilon\, r\leq -W_r(\omega)\leq\epsilon\, r.
\end{align}
Further, for any $k>\frac{K(\omega)}{2}$, it holds that
\begin{align}\label{W1}
 \int_{T_{2k}+\delta'_{2k}}^{T_{2k+1}-\delta_{2k+1}}e^{\frac{1}{4}r-W_r(\omega)}\rmd r\leq \frac{4}{1+4\epsilon}\Big(e^{(\frac{1}{4}+\epsilon)(T_{2k+1}-\delta_{2k+1})}-e^{(\frac{1}{4}+\epsilon)(T_{2k}+\delta'_{2k})}\Big)
\end{align}
and
\begin{align}\label{W2}
\int_{T_{2k}+\delta'_{2k}}^{T_{2k+1}-\delta_{2k+1}}e^{\frac{1}{4}r-W_r(\omega)}\rmd r\geq\frac{4}{1-4\epsilon}\Big(e^{(\frac{1}{4}-\epsilon)(T_{2k+1}-\delta_{2k+1})}-e^{(\frac{1}{4}-\epsilon)(T_{2k}+\delta'_{2k})}\Big).
\end{align}

We set
$$A_{n}:=\int_{1-\delta_0}^{T_{n}}f(r)\,e^{\int_{1-\delta_0}^r(\frac{1}{2}-f(u))\rmd u-W_r}\rmd r,~n\geq1.$$
Choose two subsequences $\{T_{2n}\}$ and $\{T_{2n-1}\}$. It follows from \eqref{Ly} and \eqref{W} that
\begin{align*}
&\limsup_{n\rightarrow+\infty}\frac{1}{T_{2n}}\log|\partial_x\Phi_{0,T_{2n}}(\omega,1)|=\frac{1}{2}+\limsup_{n\rightarrow+\infty}\frac{1}{T_{2n}}\log A_{2n}(\omega)
\end{align*}
and
\begin{align*}
&\liminf_{n\rightarrow+\infty}\frac{1}{T_{2n-1}}\log|\partial_x\Phi_{0,T_{2n-1}}(\omega,1)|
=\frac{1}{2}+\liminf_{n\rightarrow+\infty}\frac{1}{T_{2n-1}}\log A_{2n-1}(\omega).
\end{align*}

\begin{lemma}\label{noexist}
For almost all $\omega$,
$$\limsup_{n\rightarrow+\infty}\frac{1}{T_{2n}}\log|\partial_x\Phi_{0,T_{2n}}(\omega,1)|
<\liminf_{n\rightarrow+\infty}\frac{1}{T_{2n-1}}\log|\partial_x\Phi_{0,T_{2n-1}}(\omega,1)|.$$
\end{lemma}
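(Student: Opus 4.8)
The plan is to estimate $A_{2n}$ from above and $A_{2n-1}$ from below, exploiting the structure of $f$ in \eqref{f}: on the ``slow'' intervals $(T_{2n-1}+\delta'_{2n-1},T_{2n}-\delta_{2n})$ we have $f\equiv 0$, so $\frac12-f(u)=\frac12$ there, while on the ``fast'' intervals $(T_{2n-2}+\delta'_{2n-2},T_{2n-1}-\delta_{2n-1})$ we have $f\equiv\frac14$, so $\frac12-f(u)=\frac14$ there, and elsewhere $f\in[0,\frac14]$ so the integrand stays controlled. First I would fix $\omega$ outside the null set where \eqref{W} fails, fix $\epsilon>0$ small (to be sent to $0$ at the end), and take $n$ large enough that \eqref{W0} applies on all intervals in question. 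The key observation is that $A_n=\int_{1-\delta_0}^{T_n}f(r)\exp\!\big(\int_{1-\delta_0}^r(\tfrac12-f(u))\,\d u-W_r\big)\,\d r$ has an integrand that is globally dominated by $e^{(\frac12+\epsilon)r}$ but, crucially, on each long fast block the exponent in $\int(\frac12-f)$ accumulates at rate only $\frac14$, not $\frac12$; and the breakpoints $T_n$ have been arranged (via $S_n=2^n$, $T_n=S_n+c_n$ with $c_n$ bounded by \eqref{ST1} and $c_{n+l}/T_n\to0$ by \eqref{ST}) so that the fast and slow blocks alternate with geometrically growing lengths $T_{n+1}-T_n\sim 2^n$.

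The main estimates I would carry out are the following. For the upper bound on $A_{2n}$: split the integral over $[1-\delta_0,T_{2n}]$ into the alternating blocks. On slow blocks $f=0$ contributes nothing to the integrand's $f(r)$ prefactor (the integral vanishes there), so $A_{2n}$ only ``sees'' the fast blocks and the short transition windows of length $O(\delta_n+\delta'_n)=o(e^{-S_n^2})$. On a fast block $(T_{2k}+\delta'_{2k},T_{2k+1}-\delta_{2k+1})$ the exponent $\int_{1-\delta_0}^r(\frac12-f(u))\d u$ equals a constant (the accumulated value up to $T_{2k}+\delta'_{2k}$, which is at most $\frac12(T_{2k}+\delta'_{2k})$ minus the savings from earlier fast blocks) plus $\frac14(r-(T_{2k}+\delta'_{2k}))$; using \eqref{W1}–\eqref{W2} and summing the resulting geometric-type series, the dominant contribution to $A_{2n}$ comes from the last fast block ending near $T_{2n-1}$, giving $\log A_{2n}\lesssim \frac14 T_{2n-1}+\epsilon T_{2n}+o(T_{2n})$, hence $\frac{1}{T_{2n}}\log A_{2n}\le \frac14\cdot\frac{T_{2n-1}}{T_{2n}}+\epsilon+o(1)$, and since $T_{2n-1}/T_{2n}\to \frac12$ (because $T_n\sim 2^n$ by \eqref{ST}), $\limsup_n \frac1{T_{2n}}\log A_{2n}\le \frac18+\epsilon$. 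For the lower bound on $A_{2n-1}$: restrict the integral to the single fast block $(T_{2n-2}+\delta'_{2n-2},T_{2n-1}-\delta_{2n-1})$, where $f=\frac14>0$, apply \eqref{W2}, and note the accumulated exponent prefactor is at least the slow growth $\frac12$ accumulated over the immediately preceding slow block (length $\sim T_{2n-2}$), giving $\log A_{2n-1}\gtrsim \frac14 T_{2n-1}-\epsilon T_{2n-1}+(\text{lower order})$; dividing by $T_{2n-1}$ yields $\liminf_n \frac1{T_{2n-1}}\log A_{2n-1}\ge \frac14-\epsilon$. Combining with the displayed identities before the lemma, $\limsup_n \frac1{T_{2n}}\log|\partial_x\Phi_{0,T_{2n}}(\omega,1)|\le \frac12+\frac18+\epsilon$ while $\liminf_n \frac1{T_{2n-1}}\log|\partial_x\Phi_{0,T_{2n-1}}(\omega,1)|\ge \frac12+\frac14-\epsilon$; letting $\epsilon\to0$ gives $\frac58 < \frac34$, which is the claimed strict inequality.

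The hard part, and where I would spend most of the care, is bookkeeping the \emph{accumulated prefactor} $\exp(\int_{1-\delta_0}^{r_0}(\frac12-f(u))\d u)$ at the left endpoint $r_0$ of each fast block: one must track exactly how much ``savings'' (from fast blocks, where the rate is $\frac14$ instead of $\frac12$) versus ``full growth'' (from slow blocks, rate $\frac12$) has been accumulated, because the upper bound for $A_{2n}$ and the lower bound for $A_{2n-1}$ both hinge on this constant being essentially $\frac12 T_{2n-2}$ at the start of the relevant fast block (the savings from all earlier fast blocks telescope but are themselves geometrically smaller, so they only perturb things by a constant factor, absorbed into the $o(T_n)$ and $\epsilon T_n$ terms). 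A secondary technical point is handling the transition windows $[T_k-\delta_k,T_k+\delta'_k]$ where $f$ is only known to lie in $[0,\frac14]$: since their total length is $\sum_k(\delta_k+\delta'_k)=\sum_k o(e^{-S_k^2})<\infty$ and the integrand there is at most $e^{(\frac12+\epsilon)r}$, their contribution is $O(e^{(\frac12+\epsilon)T_n}\cdot o(e^{-S_n^2}))=o(1)$ after dividing by $T_n$ and taking logs — so they are negligible, but this needs to be said explicitly. Once these two points are nailed down, everything else is the routine geometric-series summation and the limits $T_{2n-1}/T_{2n}\to\frac12$, $T_{2n}/T_{2n-1}\to 2$ coming from \eqref{ST}.
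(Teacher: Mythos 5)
Your overall strategy is the same as the paper's (bound $A_{2n}$ above and $A_{2n-1}$ below, use \eqref{W0}--\eqref{W2}, note that only the fast blocks contribute and that the last fast block dominates, then compare constants via $T_{2n-1}/T_{2n}\to\frac12$, $T_{2n-2}/T_{2n}\to\frac14$). However, your quantitative claim at the step you yourself single out as the crux is wrong, and the number $\frac18$ you extract for the even-time $\limsup$ is false. The accumulated exponent $\int_{1-\delta_0}^{T_{2k}}(\frac12-f(u))\,\rmd u$ is \emph{not} ``essentially $\frac12 T_{2k}$ up to lower-order savings'': the fast blocks $(T_{2j},T_{2j+1})$ have length $\approx 2^{2j}$ and the slow blocks $(T_{2j+1},T_{2j+2})$ length $\approx 2^{2j+1}$, so the exponent at $T_{2k}$ is $\sum_{j<k}\bigl(\tfrac14\cdot 2^{2j}+\tfrac12\cdot 2^{2j+1}\bigr)+O(1)\approx \tfrac{5}{12}S_{2k}\approx\tfrac{5}{12}T_{2k}$, i.e.\ the ``savings'' are $\approx\frac{1}{12}T_{2k}$, a fixed positive fraction of $T_{2k}$ (hence of $T_{2n}$), which cannot be absorbed into $o(T_{2n})$ or $\epsilon T_{2n}$ as $\epsilon\to0$. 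Consequently, on the dominant fast block the exponent at $r$ is $\frac14 r+\frac16 T_{2n-2}$, not $\frac14 r$, and the correct value of the even-time quantity is $\limsup_n\frac1{T_{2n}}\log A_{2n}=\frac14\cdot\frac12+\frac16\cdot\frac14=\frac16$ (this is exactly the paper's bound $\frac16+\frac{\epsilon}{2}$, and a matching lower bound along the same block shows $\frac16$ is attained), which strictly exceeds your claimed $\frac18+\epsilon$. So as written your upper estimate is not just unproven but provably false, and the derivation that produced it (dropping the accumulated prefactor / treating the savings as telescoping to a constant) does not work.

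The good news is that the error does not destroy the strategy: your lower bound $\liminf_n\frac1{T_{2n-1}}\log A_{2n-1}\ge\frac14-\epsilon$ is valid (using only $\int_{1-\delta_0}^{T_{2n-2}}(\frac12-f)\ge\frac14 T_{2n-2}+O(1)$ together with \eqref{W2}), and once the even-time bound is corrected to $\frac16+\frac{\epsilon}{2}$ you still get a strict gap, $\frac12+\frac16<\frac12+\frac14$, so the lemma follows. The paper does the bookkeeping exactly, carrying the prefactor $e^{\frac{5}{12}T_{2k}}$ through both estimates, which yields the sharper pair $\frac12+\frac16$ versus $\frac12+\frac13$ and then chooses $\epsilon$ with $\frac13-\epsilon>\frac16+\frac{\epsilon}{2}$. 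To repair your write-up you must redo the upper bound keeping the term $\frac16 T_{2n-2}$ (equivalently, track the exact accumulated rate $\frac{5}{12}$), and also correct the statement that the slow block preceding $T_{2n-1}$ has length $\sim T_{2n-2}$ (it has length $\sim\frac12 T_{2n-2}$; your lower bound survives because the rate $\frac12-f\ge\frac14$ everywhere already gives the prefactor you need). The treatment of the transition windows and the reduction to the dominant block are fine.
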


\begin{proof}
It follows from \eqref{f}, \eqref{ST1} and \eqref{W0} that for sufficiently large $n$,
\begin{align*}
&A_{2n}(\omega)\\
&=\int_{1-\delta_0}^{T_{2n}}f(r)\,e^{\int_{1-\delta_0}^r(\frac{1}{2}-f(u))\rmd u-W_r(\omega)}\rmd r\\
&\leq\Big(\sum_{k=0}^{n-1}\int_{T_{2k}+\delta'_{2k}}^{T_{2k+1}-\delta_{2k+1}}
+\sum_{k=0}^{2n}\int_{T_{k}-\delta_{k}}^{T_{k}+\delta'_{k}}\Big)f(r)\,e^{\int_{1-\delta_0}^r\frac{1}{2}-f(u)\rmd u-W_r(\omega)}\rmd r\\
&\leq\frac{1}{4}\sum_{k=0}^{n-1}e^{\int_{1-\delta_0}^{S_{2k}+c_{2k}+\delta'_{2k}}\frac{1}{2}-f(r)\rmd r}
\int_{T_{2k}+\delta'_{2k}}^{T_{2k+1}-\delta_{2k+1}}e^{\int_{T_{2k}+\delta'_{2k}}^{r}\frac{1}{2}-f(u)\rmd u-W_r(\omega)}\rmd r\\
&\qquad\qquad\qquad\qquad\qquad\qquad+\frac{1}{4}\sum_{k=0}^{2n}\int_{T_{k}-\delta_{k}}^{T_{k}+\delta'_{k}}e^{\int_{1-\delta_0}^r\frac{1}{2}-f(u)\rmd u-W_r(\omega)}\rmd r\\
&\leq C\sum_{k=0}^{n-1}e^{\frac{5}{12}S_{2k}+\frac{1}{2}\sum_{l=0}^{k-1}(c_{2l+2}-c_{2l})}
\int_{T_{2k}+\delta'_{2k}}^{T_{2k+1}-\delta_{2k+1}}e^{\frac{1}{4}r-\frac{1}{4}(T_{2k}+\delta'_{2k})-W_r(\omega)}\rmd r\\
&\qquad\qquad\qquad\qquad\qquad\qquad+C\Big(\sum_{k=0}^{K(\omega)}+\sum_{k=K(\omega)+1}^{2n}\Big)\int_{T_{k}-\delta_{k}}^{T_{k}+\delta'_{k}}e^{\frac{1}{2}r-W_r(\omega)}\rmd r\\
&\leq C'\sum_{k=0}^{n-1}e^{\frac{5}{12}T_{2k}-\frac{1}{4}T_{2k}}\int_{T_{2k}+\delta'_{2k}}^{T_{2k+1}-\delta_{2k+1}}
e^{\frac{1}{4}r-W_r(\omega)}\rmd r
+C\sum_{k=K(\omega)+1}^{2n}\int_{T_{k}-\delta_{k}}^{T_{k}+\delta'_{k}}e^{\frac{1}{2}r-W_r(\omega)}\rmd r+C(\omega)\\
&\leq C'\sum_{k=0}^{n-1}e^{\frac{1}{6}T_{2k}}\int_{T_{2k}+\delta'_{2k}}^{T_{2k+1}-\delta_{2k+1}}e^{\frac{1}{4}r-W_r(\omega)}\rmd r
+C\sum_{k=K(\omega)+1}^{2n}(\delta'_{k}+\delta_{k})e^{(\frac{1}{2}-\epsilon)(T_{k}+\delta'_{k})}+C(\omega)\\
&\leq C'\sum_{k=0}^{n-1}e^{\frac{1}{6}T_{2k}}\int_{T_{2k}+\delta'_{2k}}^{T_{2k+1}-\delta_{2k+1}}e^{\frac{1}{4}r-W_r(\omega)}\rmd r
+C'(\omega),
\end{align*}
where $C'(\omega)$ is finite because of $\delta'_{n}+\delta_{n}=o(e^{-S_n^2})$, $T_n=S_n+c_n$ and \eqref{ST}. Similarly, we obtain
\begin{align*}
A_{2n-1}(\omega)&=\int_{1-\delta}^{T_{2n-1}}f(r)\,e^{\int_{1-\delta_0}^r(\frac{1}{2}-f(u))\rmd u-W_r(\omega)}\rmd r\\
&\geq\frac{1}{4}\sum_{k=0}^{n-1}\int_{T_{2k}+\delta'_{2k}}^{T_{2k+1}-\delta_{2k+1}}e^{\int_{1-\delta_0}^r\frac{1}{2}-f(u)\rmd u-W_r(\omega)}\rmd r\\
&\geq\frac{1}{4}\sum_{k=0}^{n-1}e^{\int_{1-\delta_0}^{T_{2k}+\delta'_{2k}}\frac{1}{2}-f(r)\rmd r}\int_{T_{2k}+\delta'_{2k}}^{T_{2k+1}-\delta_{2k+1}}e^{\frac{1}{4}r-\frac{1}{4}(T_{2k}+\delta'_{2k})-W_r(\omega)}\rmd r\\
&\geq C''\sum_{k=0}^{n-1}e^{\frac{5}{12}T_{2k}-\frac{1}{4}T_{2k}}
\int_{T_{2k}+\delta'_{2k}}^{T_{2k+1}-\delta_{2k+1}}e^{\frac{1}{4}r-W_r(\omega)}\rmd r\\
&=C''\sum_{k=0}^{n-1}e^{\frac{1}{6}T_{2k}}\int_{T_{2k}+\delta'_{2k}}^{T_{2k+1}-\delta_{2k+1}}e^{\frac{1}{4}r-W_r(\omega)}\rmd r.
\end{align*}
Hence, the facts $T_n=S_n+c_n$, \eqref{ST} and \eqref{W1} imply that
\begin{align*}
&\limsup_{n\rightarrow+\infty}\frac{1}{T_{2n}}\log A_{2n}(\omega)\\
&=\limsup_{n\rightarrow+\infty}\frac{1}{T_{2n}}\log \Bigr(C'\Bigr[\sum_{k=0}^{K(\omega)}+\sum_{k=K(\omega)}^{n-1}\Bigr]e^{\frac{1}{6}T_{2k}}\int_{T_{2k}+\delta'_{2k}}^{T_{2k+1}-\delta_{2k+1}}e^{\frac{1}{4}r-W_r(\omega)}\rmd r+C'(\omega)\Bigr)\\
&\leq\limsup_{n\rightarrow+\infty}\frac{1}{T_{2n}}\log (n-K(\omega))\frac{4}{1+4\epsilon}e^{\frac{1}{6}T_{2n-2}}
\Big(e^{(\frac{1}{4}+\epsilon)(T_{2n-1}-\delta_{2n-1})}-e^{(\frac{1}{4}+\epsilon)(T_{2n-2}+\delta'_{2n-2})}\Big)\\
&\leq\limsup_{n\rightarrow+\infty}\frac{1}{T_{2n}}\log e^{\frac{1}{6}T_{2n-2}}
\Big(e^{(\frac{1}{4}+\epsilon)(T_{2n-1}-\delta_{2n-1})}-e^{(\frac{1}{4}+\epsilon)(T_{2n-2}+\delta'_{2n-2})}\Big)\\
&\leq\limsup_{n\rightarrow+\infty}\frac{1}{T_{2n}}\log e^{(\frac{1}{6}+\frac{1}{4}+\epsilon)T_{2n-2}}
\Big(e^{(\frac{1}{4}+\epsilon)(T_{2n-1}-T_{2n-2})-(\frac{1}{4}+\epsilon)\delta_{2n-1}}-e^{(\frac{1}{4}+\epsilon)\delta'_{2n-2}}\Big)\\
&\leq\limsup_{n\rightarrow+\infty}\frac{1}{T_{2n}}\log
e^{(\frac{2}{3}+2\epsilon)T_{2n-2}+(\frac{1}{4}+\epsilon)(c_{2n-1}-2c_{2n-2})-(\frac{1}{4}+\epsilon)\delta_{2n-1}}\\
&\leq\limsup_{n\rightarrow+\infty}\frac{1}{T_{2n}}\log
e^{(\frac{1}{6}+\frac{1}{2}\epsilon)T_{2n}+(\frac{2}{3}+2\epsilon)(c_{2n-2}-\frac{1}{4}c_{2n})+(\frac{1}{4}+\epsilon)(c_{2n-1}-2c_{2n-2})-(\frac{1}{4}+\epsilon)\delta_{2n-1}}\\
&=\frac{1}{2}\epsilon+\frac{1}{6}.
\end{align*}
Moreover,
\begin{align*}
&\liminf_{n\rightarrow+\infty}\frac{1}{T_{2n-1}}\log A_{2n-1}(\omega)\\
&\geq\liminf_{n\rightarrow+\infty}\frac{1}{T_{2n-1}}\log \Bigr(C''\Bigr[\sum_{k=0}^{K(\omega)}+\sum_{k=K(\omega)}^{n-1}\Bigr]e^{\frac{1}{6}T_{2k}}
\int_{T_{2k}+\delta'_{2k}}^{T_{2k+1}-\delta_{2k+1}}e^{\frac{1}{4}r-W_r(\omega)}\rmd r\Bigr)\\
&=\liminf_{n\rightarrow+\infty}\frac{1}{T_{2n-1}}\log \sum_{k=K(\omega)}^{n-1}e^{\frac{1}{6}T_{2k}}\int_{T_{2k}+\delta'_{2k}}^{T_{2k+1}-\delta_{2k+1}}e^{\frac{1}{4}r-W_r(\omega)}\rmd r\\
&\geq\liminf_{n\rightarrow+\infty}\frac{1}{T_{2n-1}}\log\frac{4}{1-4\epsilon} \sum_{k=K(\omega)}^{n-1}\Big(e^{(\frac{1}{4}-\epsilon+\frac{1}{12})T_{2k+1}-(\frac{1}{4}-\epsilon)\delta_{2k+1}-\frac{1}{12}c_{2k+1}+\frac{1}{6}c_{2k}}\\
&\qquad\qquad\qquad\qquad\qquad\qquad\qquad\qquad-e^{(\frac{1}{4}-\epsilon+\frac{1}{6})T_{2k}+(\frac{1}{4}-\epsilon)\delta'_{2k}}\Big)\\
&\geq\liminf_{n\rightarrow+\infty}\frac{1}{T_{2n-1}}\log \Big(e^{(\frac{1}{3}-\epsilon)T_{2n-1}-(\frac{1}{4}-\epsilon)\delta_{2n-1}-\frac{1}{12}c_{2n-1}+\frac{1}{6}c_{2n-2}}
-e^{(\frac{5}{12}-\epsilon)T_{2n-2}+(\frac{1}{4}-\epsilon)\delta'_{2n-2}}\Big)\\
&\geq\liminf_{n\rightarrow+\infty}\frac{1}{T_{2n-1}}\log e^{(\frac{1}{3}-\epsilon)T_{2n-1}}
\Big(e^{-(\frac{1}{4}-\epsilon)\delta_{2n-1}-\frac{1}{12}c_{2n-1}+\frac{1}{6}c_{2n-2}}\\
&\qquad\qquad\qquad\qquad\qquad\qquad\qquad\qquad-e^{-(\frac{1}{8}-\frac{1}{2}\epsilon)T_{2n-1}+(\frac{1}{4}\epsilon+\frac{1}{6})(c_{2n-2}-\frac{1}{2}c_{2n-1})+(\frac{1}{4}-\epsilon)\delta'_{2n-2}}\Big)\\
&=\frac{1}{3}-\epsilon.
\end{align*}
Hence, by \eqref{Ly} and \eqref{W},
\begin{align*}
\limsup_{n\rightarrow+\infty}\frac{1}{T_{2n}}\log|\partial_x\Phi_{0,T_{2n}}(\omega,1)|&=\frac{1}{2}+\limsup_{n\rightarrow+\infty}\frac{1}{T_{2n}}\log A_{2n}(\omega)\\
&\leq \frac{1}{2}+\frac{1}{2}\epsilon+\frac{1}{6}.
\end{align*}
Similarly,
\begin{align*}
&\liminf_{n\rightarrow+\infty}\frac{1}{T_{2n-1}}\log|\partial_x\Phi_{0,T_{2n-1}}(\omega,1)|\geq \frac{1}{2}+\frac{1}{3}-\epsilon.
\end{align*}
We can choose a suitable $\epsilon>0$ satisfying $$\frac{1}{3}-\epsilon>\frac{1}{2}\epsilon+\frac{1}{6},$$
such that for almost all $\omega$,
$$\limsup_{n\rightarrow+\infty}\frac{1}{T_{2n}}\log|\partial_x\Phi_{0,T_{2n}}(\omega,1)|
<\liminf_{n\rightarrow+\infty}\frac{1}{T_{2n-1}}\log|\partial_x\Phi_{0,T_{2n-1}}(\omega,1)|.$$
\end{proof}
\begin{rem}
This lemma implies that for almost all $\omega\in\Omega$, $$\lim_{t\rightarrow+\infty}\frac{1}{t}\log|\partial_x\Phi_{0,t}(\omega,1)|$$ does not exist.
\end{rem}

Above, we considered the case of  $x=1$.  For any $x\leq a_0$ and $x\neq1$, consider the following ODE with the initial value $x\in\R$:
$$\overline{Y}_t(x)=x+\int_0^t\overline{Y}_{r}(x)-\overline{g}(\overline{Y}_{r}(x))\,\rmd r.$$
Since the coefficients of this equation are positive, there exists a constant $\tau>0$ such that $\overline{Y}_{\tau}(x)=a_0$. So in this case, only a finite time shift relative to $x=1$ has occurred.

For any $x>a_0$, there exists a constant $l\in\mathbb{N}$, such that $x\in(a_l,a_{l+1}]$. Without loss of generality, assume that $l$ is even. The proof for the case when $l$ is odd follows similarly.
We set $f(t,x):=\overline{g}'(\overline{Y}_{t}(x))$. Then there exist three subsequences of $\R^+$, denoted as $\{\Gamma_n\}_{n\geq l+1}$, $\{\sigma_n\}_{n\geq l+1}$ and $\{\sigma'_n\}_{n\geq l+1}$, such that for any $k\geq 0$, $\overline{Y}_{\Gamma_{l+k}}(x)=a_{l+k}$, $\overline{Y}_{\Gamma_{l+k}+\sigma'_{l+k}}(x)=a_{l+k}+\varepsilon_{l+k}$, $\overline{Y}_{\Gamma_{l+k}-\sigma_{l+k}}(x)=a_{l+k}-\varepsilon_{l+k}$,
$\Gamma_{l+k+1}-\Gamma_{l+k}=S_{l+k}+o(e^{-S_{l+k}^2})$. Moreover, it holds that $\sigma_k,\sigma'_k=o(e^{-S_k^2})$ because of $\varepsilon_k=e^{-S_k^2}$. For given $x>a_0$, the expression of $g$ implies that
\begin{equation}
f(t,x)=
\left\{
\begin{aligned}
\nonumber
&\leq\frac{1}{4},~&&t\in(0,\Gamma_{l+2}-\sigma_{l+2}),\\
&...\\
&\leq\frac{1}{4},~&&t\in[\Gamma_{l+2n-2}-\sigma_{l+2n-2},\Gamma_{l+2n-2}+\sigma'_{l+2n-2}],\\
&\frac{1}{4},~&&t\in(\Gamma_{l+2n-2}+\sigma'_{l+2n-2},\Gamma_{l+2n-1}-\sigma_{l+2n-1}),\\
&\leq\frac{1}{4},~&&t\in[\Gamma_{l+2n-1}-\sigma_{l+2n-1},\Gamma_{l+2n-1}+\sigma'_{l+2n-1}],\\
&0,~&&t\in(\Gamma_{l+2n-1}+\sigma'_{l+2n-1},\Gamma_{l+2n}-\sigma_{l+2n}),\\
&...
\end{aligned}
\right.
\end{equation}
The remaining proof is similar to the case when $x=1$.

In conclusion, for any $x\in\R$ and almost all $\omega\in\Omega$, we can prove that the limit $$\lim_{t\rightarrow+\infty}\frac{1}{t}\log|\partial_x\Phi_{0,t}(\omega,x)|$$ does not exist.

\begin{rem}
In this example, the coefficients' dependency on the distribution is fully determined by an ODE, transforming the autonomous McKean-Vlasov SDE into a non-autonomous classical SDE. By constructing the coefficients of this ODE, we achieve alternating occurrences of rapid and slow growth for the particle distribution on a time partition with exponentially increasing interval lengths.
\end{rem}

\section*{Acknowledgements}
This work is supported by NSFC Grants 11871132, 11925102, National Key R\&D Program of China (No. 2023YFA1009200), LiaoNing Revitalization Talents Program (Grant XLYC2202042), and Dalian High-level Talent Innovation Program (Grant 2020RD09).

\end{document}